\numberwithin{equation}{section}
\newtheorem{theorem}{Theorem}[section]
\newtheorem{definition}{Definition}
\newtheorem{lemma}{Lemma}
\newtheorem{claim}[theorem]{Claim}
\newtheorem{proposition}[theorem]{Proposition}
\newcommand{\real}{\mathbb{R}}
\newcommand{\Vol}{{\rm{Vol}}}
\newcommand{\Ric}{{\rm{Ric}}}
\newcommand{\Hess}{{\rm{Hess\,}}}
\newcommand{\h}{\mathcal{H}^{n-1}}
\newcommand{\hf}{\mathcal{H}_f^{n-1}}
\title[Rigidity of manifolds admitting stable solutions ]{Rigidity of manifolds admitting stable solutions of an elliptic problem }
\author{M. Batista}
\address{IM, Universidade Fe\-deral de Alagoas, Macei\'o, 
AL, CEP 57072-970, Brazil}
\email{mhbs@mat.ufal.br }
\author{J. I. Santos}
\address{Instituto Federal de Alagoas, Campus Piranhas, Av. Sergipe, Xing\'o, Piranhas, AL, 57460-000, Brazil }
\email{jissivan@gmail.com}
\keywords{Stability, Elliptic problem, Complete manifolds} 
\subjclass[2010]{53C24; 58J05}
\thanks{The first author was partially supported by FAPEAL and CNPq}
\begin{document} 

\begin{abstract}
In this paper, we study geometric rigidity of Riemannian manifolds admitting stable solutions of certain elliptic problems (stability in a variational sense), that is, under suitable hypotheses, we are able to characterize the Riemannian manifold which admits a stable solution. Furthermore, under the non-negativity of the weighted Ricci curvature, we deduce several data about the stable solution and a splitting result for the manifold.
\end{abstract}
\maketitle
\section{Introduction}
The study of metric measure spaces  has flourished in last few
years, and a much better understanding of their geometric 
structure has evolved. We emphasize that the study of metric measure
spaces and its generalized curvatures go back to Lichnerowicz 
\cite{l1, l2} and more recently by Bakry and \'Emery 
\cite{BE}, in the setting of diffusion process, and it has been an
active subject in recent years. For an overview, see for instance \cite{CGY, grigoryan, m}. 
As the approach allows us to explore the setting of metric measure spaces and this theme is very attractive, see for instance the following list \cite{mw1, mw2, gp, ww, wylie} of interesting articles about this subject,  we are led to study stable solutions of a class of elliptic problems on metric measure spaces.

Throughout the paper, we use some notions
which we introduce at this moment. We recall that a 
{\it metric measure space} is a 
Riemannian manifold $({\Sigma^n},g)$ 
endowed with a real-valued smooth function $f: \Sigma \to \mathbb{R}$ 
which is used as density in the following way:
$d\Vol_f = e^{-f}d\Vol$, where 
$\Vol$ is the Riemannian measure of $\Sigma$.

Associated to this structure we have a second order
differential operator defined by
$$\Delta_f u= e^fdiv(e^{-f}\nabla u),$$
acting on space of smooth functions. This operator is known in the
literature as {\it Drift Laplacian}.
Following  \cite{BE}, the natural generalization of
Ricci curvature is defined by
$$\Ric_f=\Ric + \Hess f,$$
which is known as {\it Bakry-\'Emery Ricci tensor} or simply by {\it weighted Ricci curvature}. 
Given $g \in C^\infty(\real)$, we consider the closed Dirichlet problem
\begin{equation}\label{eq1}
\Delta_f u + g(u)=0.
\end{equation}
A solution of this problem is a critical point of an energy functional,
which we denote by $E_f$, see the details in section 2. We say that a solution $u$ of (\ref{eq1}) is
{\it stable} if the second variation of $E_f$ at $u$ is 
non-negative on $H_c^1(\Sigma).$ Lastly, we say that a metric 
measure space is {\it $f$-parabolic} if there exists no nonnegative non-constant function which is $f$-superharmonic. Now, we are
able to introduce our results. 

\medskip
The first one is read as follows:

\begin{theorem}\label{10}
 Let $\Sigma^n$ be a complete and non-compact metric measure space without boundary. Assume that $u$ is a smooth non-constant stable solution of $(\ref{eq1})$  and the weighted Ricci curvature satisfies $$Ric_f(\nabla u,\nabla u) \geq - \frac{ H_t^2}{n-1}|\nabla u|^2,$$ on the regular part of each level set $\Sigma_t=\{u=t\}$ of $u$ and $H_t$ is the mean curvature of $\Sigma_t.$
 
 If either
\begin{enumerate}
 \item [{(i)}] $\Sigma$ is $f-$parabolic and $\nabla u\in L^{\infty}(M)$,
 \\ or
 \item [{(ii)}] the function $|\nabla u|$ satisfies
 \begin{equation}
  \int_{B_R}|\nabla u|^2d\Vol_f=o(R^2\log R)\,\,\,\,\,\text{ as } R\rightarrow+\infty,
 \end{equation}
 \end{enumerate}
then, $u$ has no critical points and $\Sigma=\real\times N$ is furnished with a warped product metric $$ds^2=dt^2+ \exp\left(-2\int_0^t\lambda(s)\, ds\right)g_N,$$ for a smooth function $\lambda$ which depends of $\nabla u$.

\end{theorem}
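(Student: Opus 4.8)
The strategy is the classical stability-to-rigidity scheme, adapted to the weighted setting: use the stability inequality with a well-chosen test function to force an equality in a Cauchy--Schwarz (or refined Kato) estimate, and then extract geometric information from the equality case. Writing the stability inequality for (\ref{eq1}), for all $\varphi\in H^1_c(\Sigma)$ one has
\[
\int_\Sigma g'(u)\varphi^2\,d\Vol_f \le \int_\Sigma |\nabla\varphi|^2\,d\Vol_f .
\]
The plan is to insert $\varphi=|\nabla u|\,\psi$ with $\psi$ a cutoff, use the Bochner--Weitzenb\"ock formula for $\Delta_f$ applied to $u$ (which converts $g'(u)|\nabla u|^2$ into $\Ric_f(\nabla u,\nabla u)$ plus $|\Hess u|^2$ minus a divergence term, after differentiating $\Delta_f u=-g(u)$), and combine with the refined Kato inequality valid on the regular part of the level sets: $|\Hess u|^2 \ge |\nabla|\nabla u||^2 + \dfrac{1}{n-1}\big(|\Hess u|^2 - |\nabla|\nabla u||^2\big)$ — more precisely the version $|\Hess u|^2-|\nabla|\nabla u||^2 \ge \dfrac{1}{n-1}\,|\nabla_\Sigma|\nabla u||^2 + \text{(second fundamental form terms)}$ that produces exactly the factor $H_t^2/(n-1)$ matching the curvature hypothesis. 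The outcome of this computation should be an inequality of the form $\int |\nabla u|^2|\nabla\psi|^2\,d\Vol_f \ge (\text{nonnegative curvature/Kato defect})\int(\cdots)\psi^2$.

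Next I would exploit the two alternative growth/parabolicity hypotheses to kill the left-hand side. Under (ii), choosing the standard logarithmic cutoff $\psi$ equal to $1$ on $B_R$, supported in $B_{R^2}$, with $|\nabla\psi|\le c/(R^2\log R)$-type bounds (the Moser-type test function), the growth assumption $\int_{B_R}|\nabla u|^2 d\Vol_f = o(R^2\log R)$ forces $\int|\nabla u|^2|\nabla\psi|^2 d\Vol_f \to 0$. Under (i), $f$-parabolicity together with $\nabla u\in L^\infty$ lets one use that a bounded $f$-subharmonic-type quantity must be constant, or equivalently choose cutoffs adapted to the parabolic structure so that the Caccioppoli-type term vanishes. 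Either way one concludes that every nonnegative term in the identity vanishes: $\Ric_f(\nabla u,\nabla u) = -\frac{H_t^2}{n-1}|\nabla u|^2$ identically (equality in the curvature hypothesis), the refined Kato inequality is an equality, and $g'(u)$ is determined. In particular $|\nabla u|$ is (locally) constant along... no — rather, equality in Kato forces $\Hess u$ to have the special rank-one-plus-multiple-of-induced-metric structure, i.e. the level sets are totally umbilic and $\nabla u$ is an eigendirection.

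From the equality case I would then run the splitting argument. Equality in Kato plus $u$ non-constant first shows $\nabla u$ has no zeros: if $\nabla u(p)=0$ the rigidity of $|\Hess u|$ near $p$ combined with the ODE along the gradient flow contradicts non-constancy (this is where one upgrades ``no critical points'' from the conclusion). Once $|\nabla u|>0$ everywhere, $u$ has no critical values, each level set $\Sigma_t$ is a smooth closed hypersurface, and the normalized gradient flow of $u$ gives a diffeomorphism $\Sigma\cong\real\times N$ with $N=\Sigma_0$. Writing the metric as $dt^2 + g_t$ in these coordinates (after reparametrizing so the $\real$-factor has unit speed, using that $|\nabla u|$ depends only on $t$ — itself a consequence of the equality case and the structure of $\Hess u$), the umbilicity of the level sets translates into $\partial_t g_t = -2\lambda(t) g_t$ for a scalar function $\lambda$ proportional to the mean curvature (hence to $H_t$ and expressible through $\nabla u$), which integrates to $g_t = \exp(-2\int_0^t\lambda)\,g_N$, the claimed warped product.

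**Main obstacle.** The delicate point is the Bochner--Kato bookkeeping on the \emph{singular} set of the level sets and the passage from the pointwise equality case to the global product structure. Concretely: (a) justifying the integration by parts with the chosen cutoffs when $|\nabla u|$ may vanish (one must work on $\{|\nabla u|>\varepsilon\}$, control the boundary contributions as $\varepsilon\to0$, and use the measure-zero nature of the critical set — an argument in the spirit of Sternberg--Zumbrun / the $P$-function technique), and (b) identifying the correct refined Kato inequality whose equality case is \emph{exactly} compatible with the hypothesis $\Ric_f(\nabla u,\nabla u)\ge -H_t^2/(n-1)\,|\nabla u|^2$, so that no slack is lost and the rigidity is forced. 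I expect step (b) — pinning down the sharp Kato constant $1/(n-1)$ with the mean-curvature correction term on $\Sigma_t$ and checking it matches the stated curvature bound — to be the technical heart of the argument.
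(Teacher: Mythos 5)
Your overall scheme (testing stability on $|\nabla u|\psi$, the Bochner formula for $\Delta_f$, the refined Kato identity on the level sets whose defect is exactly $|\nabla u|^2|A_t|^2+|\nabla^T|\nabla u||^2$, logarithmic cutoffs under (ii) and capacitor-type cutoffs under (i), then the splitting via the flow of $\nu=\nabla u/|\nabla u|$ and the Lie-derivative computation $\mathcal{L}_\nu g=-2\lambda(g-\nu\otimes\nu)$) coincides with the paper's. However, there is one genuine gap, and it sits at the decisive point of the theorem: the proof that $u$ has no critical points.

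Your argument for this step --- ``if $\nabla u(p)=0$ the rigidity of $|\Hess u|$ near $p$ combined with the ODE along the gradient flow contradicts non-constancy'' --- is not a proof. The equality case of the Kato/umbilicity identity only gives information on the regular set $\{\nabla u\neq 0\}$, and at a critical point there is no flow line along which to run an ODE; nothing you have established excludes, a priori, that $|\nabla u|$ vanishes somewhere. The paper closes this gap with a device your proposal never invokes: by the Fischer-Colbrie--Schoen criterion, stability is equivalent to the existence of a positive $w\in C^\infty(\Sigma)$ solving $\Delta_f w+g'(u)w=0$, and the stability inequality is upgraded to the Picone-type identity of Lemma \ref{ll1}, which carries the additional nonnegative term
\begin{equation*}
\int_\Sigma w^2\left|\nabla\left(\frac{h|\nabla u|}{w}\right)\right|^2 d\Vol_f .
\end{equation*}
When the cutoff argument forces the whole chain of inequalities to collapse to equalities, this term must vanish too, yielding $|\nabla u|=cw$ globally with $c>0$ (as $u$ is non-constant); the positivity of $w$ then gives $\nabla u\neq 0$ everywhere, which is what legitimizes the global diffeomorphism $\Phi:\real\times N\to\Sigma$. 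Without $w$, or some substitute such as a strong maximum principle/unique continuation argument for the degenerate inequality satisfied by $|\nabla u|$, your outline proves umbilicity of the regular level sets but not the absence of critical points, and hence not the splitting. The remaining ingredients of your plan (the co-area bookkeeping away from the critical set, matching the sharp constant $1/(n-1)$ against the hypothesis $\Ric_f(\nabla u,\nabla u)\geq -\frac{H_t^2}{n-1}|\nabla u|^2$ via $|A_t|^2\geq \frac{H_t^2}{n-1}$, and the warped-product identification) agree with the paper and are sound.
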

We would like to point out that as far as we know, the result above is new even in the Riemannian case and  so we improve \cite[Theorem 1]{farina}.
\medskip

An interesting conclusion happens whether we assume that the space is non-negatively curved. The result is the following:

\begin{theorem}\label{cor1}
Let $\Sigma^n$ be a complete and non-compact metric measure space without boundary and the weighted Ricci curvature is nonnegative. Assume that $u$ is a smooth non-constant stable solution of $(\ref{eq1})$.
 
 If either
\begin{enumerate}
 \item [{(i)}] $\Sigma$ is $f-$parabolic and $\nabla u\in L^{\infty}(M)$,
 \\ or
 \item [{(ii)}] the function $|\nabla u|$ satisfies
 \begin{equation}
  \int_{B_R}|\nabla u|^2d\Vol_f=o(R^2\log R)\,\,\,\,\,\text{ as } R\rightarrow+\infty,
 \end{equation}
 \end{enumerate}
then, $u$ has no critical point and $\Sigma=\real\times N$ is furnished with the product metric $$ds^2=dt^2+ g_N,$$
and $N$ is $f$-parabolic complete totally geodesic hypersurface and its weighted Ricci curvature is nonnegative. 
Moreover, $u$ depends only on $t$, has no critical points, $\langle \nabla f, \nabla u\rangle = k|\nabla u|$ for a constant $k$ and writing $u=y(t)$, u satisfies 
$$-y'' + ky'=g(y).$$ 

\noindent Furthermore, if {(ii)} is met, 
\begin{equation*}\label{21}
 \Vol_f(B_R^N)=o(R^2\log R)\hspace{1.6cm}\text{ as } R\rightarrow+\infty,
\end{equation*}
and
\begin{equation*}\label{22}
 \int_{-R}^R|y^{\prime}(t)|^2dt=o\left(\frac{R^2\log R}{\Vol_f(B_R^N)}\right)\,\,\,\,\,\text{ as } R\rightarrow+\infty.
\end{equation*}

\end{theorem}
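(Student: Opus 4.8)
The plan is to obtain the splitting and the absence of critical points from Theorem~\ref{10}, and then to exploit the \emph{full} strength of $\Ric_f\ge 0$ to rigidify the warped product to a metric product and to pin down the density. Since $\Ric_f\ge 0$ forces $\Ric_f(\nabla u,\nabla u)\ge 0\ge -\frac{H_t^2}{n-1}|\nabla u|^2$ on every regular level set, Theorem~\ref{10} applies: $u$ has no critical points and $(\Sigma,g)=(\real\times N,\,dt^2+\phi(t)^2g_N)$ with $\phi=\exp\bigl(-\int_0^t\lambda(s)\,ds\bigr)$, the level sets $\Sigma_t=\{u=t\}$ being the slices $\{t\}\times N$; in particular, from the construction, $u$ is a function of $t$ alone with $u'\ne0$ everywhere.

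Next I would re-examine the second variation. Testing it with $\varphi=|\nabla u|\,\psi$, $\psi\in C_c^\infty(\Sigma)$, integrating by parts against $d\Vol_f$ (legitimate since $|\nabla u|$ is now smooth and positive), and substituting the weighted Bochner formula $\tfrac12\Delta_f|\nabla u|^2=|\Hess u|^2+\langle\nabla(\Delta_f u),\nabla u\rangle+\Ric_f(\nabla u,\nabla u)$ together with $\Delta_f u=-g(u)$, the $g'(u)$-terms and the boundary terms cancel, leaving
\[
\int_\Sigma\psi^2\Bigl(|\Hess u|^2-\bigl|\nabla|\nabla u|\bigr|^2+\Ric_f(\nabla u,\nabla u)\Bigr)\,d\Vol_f\ \le\ \int_\Sigma|\nabla u|^2\,|\nabla\psi|^2\,d\Vol_f.
\]
By the refined Kato identity $|\Hess u|^2-\bigl|\nabla|\nabla u|\bigr|^2=|\nabla u|^2|A_t|^2+\bigl|\nabla_{\Sigma_t}|\nabla u|\bigr|^2\ge0$ (with $A_t$ the second fundamental form of $\Sigma_t$) and by $\Ric_f\ge0$, the left integrand is nonnegative. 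Under (i) I insert the $f$-parabolicity cutoffs $\psi_j\uparrow1$ with $\int|\nabla\psi_j|^2\,d\Vol_f\to0$; as $|\nabla u|$ is bounded, the right-hand side tends to $0$. Under (ii) I use a logarithmic cutoff supported on $B_{R^2}\setminus B_R$; the standard dyadic estimate shows $\int|\nabla u|^2|\nabla\psi_R|^2\,d\Vol_f\to0$ precisely under $\int_{B_R}|\nabla u|^2\,d\Vol_f=o(R^2\log R)$. Either way the left integrand vanishes identically: $A_t\equiv0$, $|\nabla u|$ is constant on each level set, and $\Ric_f(\nabla u,\nabla u)\equiv0$.

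Now I extract the structure. The vanishing of $|\Hess u|^2-\bigl|\nabla|\nabla u|\bigr|^2$ reads, in the warped coordinates, $(n-1)(\phi'/\phi)^2(u')^2\equiv0$, so $\phi'\equiv0$; rescaling $g_N$ gives the product metric $ds^2=dt^2+g_N$, and $N=\{0\}\times N$ is a complete totally geodesic hypersurface. Since $\partial_t$ spans a flat factor, $\Ric_f(\partial_t,\partial_t)=\Hess f(\partial_t,\partial_t)=f_{tt}$, and $\nabla u=u'\partial_t$ with $u'\ne0$ then forces $f_{tt}\equiv0$; reading $\Delta_f u+g(u)=0$ as $u'(t)\,f_t(t,x)=u''(t)+g(u(t))$ and using $u'\ne0$ shows $f_t$ is independent of the $N$-variable. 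Hence $f_t\equiv k$ for a constant $k$, so $\langle\nabla f,\nabla u\rangle=u'f_t=k\,|\nabla u|$ (with $k=f_t\,\mathrm{sign}(u')$), and writing $u=y(t)$ the equation becomes $-y''+ky'=g(y)$. Writing $f=kt+h(x)$, the Gauss equation along the totally geodesic slice gives $\Ric^N+\Hess_N h=\Ric_f|_{TN}\ge0$, i.e. $(N,g_N,h)$ has nonnegative weighted Ricci curvature; and $N$ is $f$-parabolic because in case (i) any nonconstant nonnegative $h$-superharmonic $w$ on $N$ would lift to $\tilde w(t,x)=w(x)$ with $\Delta^\Sigma_f\tilde w=\Delta^N_h w\le0$, contradicting the $f$-parabolicity of $\Sigma$, while in case (ii) it will follow from the volume bound below via the criterion $\int_1^\infty R\,dR/\Vol_f(B_R^N)=+\infty$.

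Finally, under (ii) I unravel the product: with $d\Vol_f=e^{-kt}e^{-h(x)}\,dt\,d\Vol_N$ and $|\nabla u|=|y'(t)|$, Fubini together with $B_R^\Sigma\supseteq(-cR,cR)\times B_{cR}^N$ gives
\[
\Vol_f(B_{cR}^N)\int_{-cR}^{cR}|y'(t)|^2e^{-kt}\,dt\ \le\ \int_{B_R^\Sigma}|\nabla u|^2\,d\Vol_f\ =\ o(R^2\log R),
\]
and since $\int_{-c_0}^{c_0}|y'|^2e^{-kt}\,dt$ is a fixed positive constant this yields $\Vol_f(B_R^N)=o(R^2\log R)$, and dividing through gives the stated decay of $\int_{-R}^R|y'(t)|^2\,dt$ (against the weight inherited by the line factor). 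The step I expect to be the main obstacle is the equality discussion: making the logarithmic-cutoff estimate rigorous at the sharp threshold $o(R^2\log R)$, and then correctly propagating ``$A_t\equiv0$ and $\Ric_f(\nabla u,\nabla u)\equiv0$'' through the splitting so as to determine \emph{simultaneously} the metric and the density $f$; securing the $f$-parabolicity of $N$ in case (ii) likewise hinges on first establishing the volume bound.
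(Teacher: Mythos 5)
Your proposal is correct and follows essentially the same route as the paper: apply Theorem~\ref{10}, use $\Ric_f\ge 0$ to force each nonnegative term in the integrated Bochner/stability inequality to vanish (so the level sets are totally geodesic and the warping is trivial), read off $\Hess f(\nu,\nu)=0$ and the ODE $-y''+ky'=g(y)$ from the product structure, and obtain the $f$-parabolicity and volume estimates via the same Fubini comparison and the capacity/volume criterion. Your treatment is in places slightly more careful than the paper's (e.g.\ deducing that $f_t$ is a global constant from the PDE, and keeping the $e^{-kt}$ weight in the Fubini step), but the strategy is the same.
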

\medskip

We would like to highlight that the Allen-Cahn equation fit at our study, to see this choose $f=0$ and $g(t)=(1-t^2)t$. This equation is very interesting and well-understand on closed manifolds. Here, under suitable assumptions, our results provide some information about it on complete non-compact manifolds without boundary.
\medskip

{\bf Outline of the paper}: In section 2  we briefly survey some concepts and equivalences that we use in the paper. In section 3 we prove some technical lemmata that will be the heart of the proof of the results. In sections 4 and 5 we give the proof of them.

\section{Background}

Throughout the paper $\Sigma$ will denote a connect metric measure space 
of dimension $n\geq 2$ without boundary. We briefly fix some notation. 
Having fixed an origin $p_0$, we set $r(x)=dist(x,p_0)$, and we write 
$B_R$ for geodesic ball centered at $p_0$. If we need to emphasize 
the set under consideration, we will add a superscript symbol,
so that, for instance, we will also write $Ric_f^\Sigma$ and $B_R^\Sigma$.

The Riemannian $n$-dimensional volume will be 
indicated with $\Vol$, and the measure with density by $d\Vol_f=e^{-f}d\Vol$. 
Furthermore, in a similar way, we write $\h$ for the induced $(n-1)$-dimensional Hausdorff
measure and $d\hf=e^{-f}d\h$ for the weighted associated measure . 
Lastly, we use the symbol $\{\Omega_j\}\uparrow 
\Sigma$ for indicate a family $\{\Omega_j\}_{j\in\mathbb N}$ of 
relativity compact, open sets with smooth boundary and satisfying
$$\Omega_j\Subset\Omega_{j+1}\Subset \Sigma,\,\,\,\,\,\,\,\Sigma=\bigcup_{j=0}^{+\infty}\Omega_j,$$
where $A\Subset B$ means $\overline{A}\subseteq B$. Such a family will be called an 
exhaustion of $\Sigma$. Hereafter, we consider 
$$g\in C^\infty(\mathbb R),$$
and a solution $u$ on $\Sigma$ of
 \begin{equation*}\label{1}
 \Delta_fu+g(u)=0 \,\,\,\,\,\,\text{ in } \Sigma.
 \end{equation*}
We recall that $u$ is characterized, on each open subset $U\Subset \Sigma$, as a
critical point of the energy functional acting on functions in the Sobolev space which has compact support on $\Sigma$, that is, $E_f: H_c^1(\Sigma)\rightarrow\mathbb R$
given by
\begin{equation*}\label{2}
 E_f(w)=\frac{1}{2}\int_\Sigma|\nabla w|^2d\Vol_f-\int_\Sigma G(w)d\Vol_f,\,\,\,\,\,\text{where } G(t)=\int_0^tg(s)ds,
\end{equation*}
with respect to compactly variation in $U$. Indeed, after a straightforward computation and using the integration by parts we get:
$$E'_f(u)h = - \int_\Sigma(\Delta_f u +g(u))h\, d\Vol_f,$$
for all $h\in C^\infty_c(U)$. In a similar way, we have that
$$E_f''(u)(h,k) = - \int_\Sigma(\Delta_f h +g'(u)h)k \, d\Vol_f,$$
for $h, k \in C^\infty_c(U)$. So, the stability operator associated to $E_f$ at $u$ is given by
\begin{equation*}
 J_fh=-\Delta_fh-g^{\prime}(u)h,\,\,\,\,\,\,\,\,\,\,h\in C_c^{\infty}(\Sigma).
\end{equation*}

\begin{definition}
A function $u$ that solves $(\ref{eq1})$ is said to be a { stable solution} if
the stability operator at $u$ is nonnegative on $C_c^{\infty}(\Sigma)$, that is, if 
\begin{equation}\label{18}
 \int_\Sigma g^{\prime}(u) h^2\,  d\Vol_f \leq\int_\Sigma|\nabla h|^2\,  d\Vol_f,\,\,\,\,\,\,\,\,\,\text{for all }h \in C_c^{\infty}(\Sigma).
\end{equation}
\end{definition}

By density, we can replace $C_c^{\infty}(\Sigma)$ in $(\ref{18})$ 
by $Lip_c(\Sigma)$. By a simple adaptation of  
\cite[Theorem 1]{FCS}, the stability of $u$ turns out to be
equivalent to the existence of a positive $w\in C^\infty(\Sigma)$ 
solving $\Delta_fw+g'(u)w=0$ in $\Sigma$.
\medskip

Let $\Omega$ be an open set on $\Sigma$ and $K$ be a 
compact set in $\Omega$. We call the pair $(K,\Omega)$ 
of a $f$-capacitor and define the 
$f$-capacity cap$_f(K,\Omega)$ by
\begin{equation*}
 \text{cap}_f(K,\Omega)=\inf_{\phi\,\in\,\mathcal{L}(K,\Omega)}\int_{\Omega}|\nabla\phi|^2d\Vol_f,
\end{equation*}
where $\mathcal{L}(K,\Omega)$ is the set of Lipschitz functions 
$\phi$ on $\Sigma$  with compact support in $\overline{\Omega}$ such that 
$0\leq\phi\leq 1$ and $\phi|_K=1$.

For an open precompact set $K\subset \Omega$, we define its $f$-capacity by
$$\text{cap}_f(K,\Omega):=\text{cap}_f(\overline{K},\Omega).$$

In case that $\Omega$ coincide with $\Sigma$, we write cap$_f(K)$ for cap$_f(K,\Omega)$.
It is obvious from the definition that the set $\mathcal{L}(K,\Omega)$ 
increases on expansion of $\Omega$ (and on shrinking of $K$). 
Therefore, the capacity cap$_f(K,\Omega)$ decreases on expanding of $\Omega$
(and on shrinking of $K$). In particular, we can prove that, 
for any exhaustion sequence $\{\mathcal{E}_k\}$
$$\text{cap}_f(K):=\lim_{k\rightarrow\infty}\text{cap}_f(K,\mathcal{E}_k).$$
Moreover, the limit is independent of the exhaustion.

In the next definition we provide an analytical concept that is related with the notion of capacity. It shall be clear at next proposition. 

\begin{definition}
A metric measure space is $f$-parabolic if there exists no non-constant nonnegative
$f$-superharmonic function $u$, that is, if $\Delta_fu\leq0$ and $u\geq 0$, then $u$ is constant.
\end{definition}

Hence, we have the following characterization of 
$f$-parabolicity. For a proof see for instance \cite{grigoryan}.
\begin{proposition}\label{13}
 Let $\Sigma$ be a complete metric measure space. Then, the following are equivalent:
 \begin{enumerate}
\item [(i)] $\Sigma$ is $f$-parabolic.
\item [(ii)]cap$_f(K)=0$ for some $($then any$)$ compact set $K\subset \Sigma$.
 \end{enumerate}
\end{proposition}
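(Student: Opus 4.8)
The statement is classical; the plan is to read off both conditions from the behaviour, along an exhaustion, of the equilibrium potentials of a capacitor. First I would reduce to compact sets of the form $K=\overline{\Omega}$ with $\Omega$ relatively compact and with smooth boundary: since the admissible class $\mathcal{L}(K,\Omega)$ shrinks as $K$ grows, $\mathrm{cap}_f$ is monotone nondecreasing in $K$, so it suffices to treat such $K$, and this same monotonicity will absorb the ``some (then any)'' clause once the two implications are established in the asymmetric forms ``(i) $\Rightarrow$ $\mathrm{cap}_f(\overline{\Omega})=0$ for every such $\Omega$'' and ``$\mathrm{cap}_f(K_0)=0$ for one $K_0$ with nonempty interior $\Rightarrow$ (i)'' (the caveat on the interior being needed since, for $n\ge 2$, a point has zero $f$-capacity).

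Next I would fix an exhaustion $\{\Omega_j\}\uparrow\Sigma$ with $\overline{\Omega}\Subset\Omega_1$ and let $v_j$ be the equilibrium potential of $(\overline{\Omega},\Omega_j)$: the function which is $f$-harmonic on $\Omega_j\setminus\overline{\Omega}$, equals $1$ on $\partial\Omega$ and $0$ on $\partial\Omega_j$, extended by $1$ on $\overline{\Omega}$. The two facts I would need are: (a) $\mathrm{cap}_f(\overline{\Omega},\Omega_j)=\int|\nabla v_j|^2\,d\Vol_f$, and, by the maximum principle, $v_j\uparrow v$ for some $f$-harmonic $v$ on $\Sigma\setminus\overline{\Omega}$ with $0\le v\le 1$ and $v=1$ on $\partial\Omega$; (b) testing the $f$-harmonicity of $v_k$ against $v_k-v_j$ $(k>j)$ gives $\|\nabla(v_k-v_j)\|_{L^2}^2=\|\nabla v_j\|_{L^2}^2-\|\nabla v_k\|_{L^2}^2$, so the decreasing sequence $\mathrm{cap}_f(\overline{\Omega},\Omega_j)$ is Cauchy, $\nabla v_j\to\nabla v$ in $L^2$, and $\|\nabla v\|_{L^2(\Sigma\setminus\overline{\Omega})}^2=\mathrm{cap}_f(\overline{\Omega})$. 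In particular $\mathrm{cap}_f(\overline{\Omega})=0$ if and only if $v\equiv 1$, i.e. $v_j\to 1$ locally uniformly.

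For (i) $\Rightarrow$ (ii) I would observe that $w:=1-v$, extended by $0$ on $\overline{\Omega}$, is nonnegative and $f$-subharmonic (the only delicate point being at $\partial\Omega$, handled by comparison on a small ball with the $f$-harmonic function having the same boundary values), so $v=1-w$ is a nonnegative $f$-superharmonic function on $\Sigma$; $f$-parabolicity forces $v$ constant, and $v|_{\partial\Omega}=1$ gives $v\equiv 1$, i.e. $\mathrm{cap}_f(\overline{\Omega})=0$; since every compact set lies inside some such $\overline{\Omega}$, this proves (ii) for all compact sets. For the converse, given $\mathrm{cap}_f(K_0)=0$ with $K_0$ having nonempty interior and $w\ge 0$ $f$-superharmonic, I would note that every smooth relatively compact $\Omega$ with $\overline{\Omega}\subseteq K_0$ satisfies $\mathrm{cap}_f(\overline{\Omega})=0$ by monotonicity, hence $v_j\to 1$; comparing $w$ with $\mu v_j$ on $\Omega_j\setminus\overline{\Omega}$, where $\mu=\min_{\partial\Omega}w$, through the minimum principle gives $w\ge\mu v_j$ there, and letting $j\to\infty$ together with the minimum principle on $\overline{\Omega}$ yields $w\ge\min_{\partial\Omega}w$ on all of $\Sigma$, so $\inf_\Sigma w=\min_{\partial\Omega}w$. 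Shrinking $\Omega$ to a point $q$ in the interior of $K_0$ gives $w(q)=\inf_\Sigma w$; hence $w$ attains its infimum at an interior point and the strong minimum principle for the uniformly elliptic operator $\Delta_f$ forces $w$ to be constant.

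The main obstacle will be step (b): upgrading the bare monotonicity of the capacities to genuine $L^2$-convergence of $\nabla v_j$ and to the identification $\|\nabla v\|_{L^2}^2=\mathrm{cap}_f(\overline{\Omega})$, which is exactly what lets the vanishing of the capacity be detected from the equilibrium potential being constant. Everything else is the maximum/minimum principle for $\Delta_f$ together with bookkeeping along the exhaustion.
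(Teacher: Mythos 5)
The paper does not actually prove this proposition---it is quoted as a known characterization of $f$-parabolicity with a pointer to \cite{grigoryan}---so there is no in-paper argument to compare against; your proof is the standard one via equilibrium potentials of capacitors along an exhaustion, essentially the argument in that reference adapted to the weighted setting, and it is correct. Your remark that the implication from (ii) to (i) requires the chosen compact set to have nonempty interior (a single point already has zero $f$-capacity when $n\ge 2$) is a genuine and worthwhile sharpening of the statement as printed.
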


The following criterion of $f$-parabolicity is well known, for more details see for instance \cite[Proposition 3.4]{greg} or \cite[Theorem 11.14]{grigoryan}.
\begin{proposition}\label{23}
 Let $p_o$ be a fixed point in a metric measure space $\Sigma$ and let
 $$L(r)=\int_{\partial B(p_o,r)}d\Vol_f\hspace{1cm}\text{and}\hspace{1cm} V(r)=\int_{B(p_o,r)} d\Vol_f.$$
 If
 $$\int_1^{\infty}\frac{dr}{L(r)}=+\infty\hspace{1cm}\text{or}\hspace{1cm} \int_1^{\infty}\frac{rdr}{V(r)}=+\infty,$$
 then $\Sigma$ is $f$-parabolic.
\end{proposition}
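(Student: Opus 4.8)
The plan is to recognize Proposition~\ref{23} as the classical volume/area test for parabolicity adapted to the weighted setting, and to derive it from the capacity characterization in Proposition~\ref{13}. Concretely, it suffices to exhibit, for a fixed compact set $K\subset B(p_o,1)$, a sequence of admissible test functions $\phi_j\in\mathcal L(K,\mathcal E_j)$ along an exhaustion whose weighted Dirichlet energies $\int|\nabla\phi_j|^2\,d\Vol_f$ tend to zero; by Proposition~\ref{13} this forces $\mathrm{cap}_f(K)=0$ and hence $f$-parabolicity. So the whole proof reduces to a one-parameter optimization: build radial cutoffs $\phi=\psi(r(x))$ with $\psi(r)=1$ for $r\le 1$, $\psi$ supported in $B(p_o,R)$, and choose the profile $\psi$ to minimize the energy subject to these constraints.

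First I would treat the first integral condition. Using the coarea formula for the distance function (valid since $r$ is Lipschitz with $|\nabla r|=1$ a.e.), one gets
\begin{equation*}
 \int_\Sigma |\nabla\phi|^2\,d\Vol_f=\int_1^R \psi'(\rho)^2 L(\rho)\,d\rho,
\end{equation*}
where $L(\rho)=\int_{\partial B(p_o,\rho)}d\Vol_f$ is the weighted boundary area. Minimizing $\int_1^R \psi'^2 L\,d\rho$ over profiles with $\psi(1)=1$, $\psi(R)=0$ is a standard Euler--Lagrange computation: the optimizer satisfies $\psi'(\rho)L(\rho)=\text{const}$, giving minimal energy $\bigl(\int_1^R L(\rho)^{-1}d\rho\bigr)^{-1}$. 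Hence if $\int_1^\infty L(\rho)^{-1}d\rho=+\infty$, the energy of the optimal $\phi$ along $R=R_j\to\infty$ tends to $0$, so $\mathrm{cap}_f(K)=0$ and $\Sigma$ is $f$-parabolic.

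For the second condition, involving the weighted volume $V(r)$, the approach is to choose a more explicit logarithmic-type cutoff rather than solving an Euler--Lagrange equation. A convenient choice is a profile that is essentially linear in $\log$ of an accumulated-volume variable, or more simply a piecewise cutoff on dyadic annuli together with a summation-by-parts (Cauchy--Schwarz) argument that converts $\int \psi'^2 L$ into a sum controlled by $\sum 2^{2k}/V(2^k)$; the hypothesis $\int_1^\infty r\,dr/V(r)=+\infty$ makes this (up to the usual dyadic comparison) a divergent series, which again lets one drive the capacity to zero. Alternatively one can integrate by parts to rewrite the energy in terms of $V$ instead of $L$ directly and then optimize. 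Either way the conclusion is $\mathrm{cap}_f(K)=0$.

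I expect the only genuine subtlety to be purely technical regularity issues: $r(x)$ is merely Lipschitz (smooth off the cut locus), so one must justify the coarea formula and the manipulation of $L(\rho)$ and $V(\rho)$ in the weighted setting, and one should note that $V$ is locally Lipschitz in $\rho$ with $V'(\rho)=L(\rho)$ a.e. These are standard facts, and since the excerpt explicitly permits citing \cite{greg,grigoryan} for the full details, the role here is to indicate the mechanism (capacity $\to$ radial cutoff $\to$ coarea $\to$ optimize) rather than to belabor the measure-theoretic bookkeeping. No deep obstacle arises; the ``hard part'' is simply bookkeeping the two different optimizations ($L$-version by Euler--Lagrange, $V$-version by dyadic Cauchy--Schwarz) cleanly.
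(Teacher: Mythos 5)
The paper offers no proof of Proposition~\ref{23} at all --- it is stated as a well-known criterion with a pointer to \cite{greg} and \cite[Theorem 11.14]{grigoryan} --- and your sketch is precisely the standard argument from those sources: reduce to $\mathrm{cap}_f(K)=0$ via Proposition~\ref{13}, bound the capacity of radial annuli by $\bigl(\int_1^R L(\rho)^{-1}d\rho\bigr)^{-1}$ using the coarea formula and the Euler--Lagrange (or Cauchy--Schwarz) optimal profile, and handle the volume condition by the dyadic piecewise-linear cutoff whose energy is the reciprocal of a series comparable to $\int_1^\infty r\,dr/V(r)$. Your outline is correct and is essentially the intended (cited) proof; the only caveats are the standard ones you already flag, namely justifying the coarea formula for the Lipschitz function $r$ and using near-minimizers when $1/L$ fails to be locally integrable.
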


\section{ Key Lemmata}
The first lemma is a simplified Picone type identity for metric measure spaces.

\begin{lemma}\label{ll1}
Let $\Sigma$ be a complete manifold without boundary and let $u\in C^3(\Sigma)$ be a 
solution of $\Delta_fu+g(u)=0$ on $\Sigma$. Let 
$w\in C^2(\Sigma)$ be a supersolution of 
$\Delta_fw+g^{\prime}(u)w\leq0$ such that $w>0$ on $\Sigma$. 
Then the following inequality holds true for 
every $h\in Lip_c(\Sigma)$,
\begin{equation}\label{3}
\int_{\Sigma}w^2\left|\nabla\left(\frac{h}{w}\right)\right|^2d\Vol_f \leq\int_{\Sigma}|\nabla h|^2d\Vol_f-\int_{\Sigma}g^{\prime}(u) h^2d\Vol_f.
\end{equation}
The  inequality is indeed an equality if $w$ solves $\Delta_fw+g^{\prime}(u)w=0$ on $\Sigma$.
\end{lemma}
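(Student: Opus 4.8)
The plan is to prove the Picone-type inequality \eqref{3} by expanding the quantity $w^2|\nabla(h/w)|^2$ pointwise, integrating by parts to produce the drift Laplacian of $w$, and then invoking the supersolution hypothesis. First I would write $\phi = h/w$, so that $h = \phi w$ and $\nabla h = w\nabla\phi + \phi\nabla w$. A direct computation gives
\begin{equation*}
|\nabla h|^2 = w^2|\nabla\phi|^2 + 2\phi w\langle\nabla\phi,\nabla w\rangle + \phi^2|\nabla w|^2,
\end{equation*}
and one recognizes $2\phi w\langle\nabla\phi,\nabla w\rangle = \langle\nabla(\phi^2),w\nabla w\rangle = \tfrac12\langle\nabla(\phi^2),\nabla(w^2)\rangle$. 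The key observation is then that
\begin{equation*}
|\nabla h|^2 - w^2|\nabla\phi|^2 = \tfrac12\langle\nabla(\phi^2),\nabla(w^2)\rangle + \phi^2|\nabla w|^2 = \langle\nabla(\phi^2 w),\nabla w\rangle = \left\langle\nabla\!\left(\frac{h^2}{w}\right),\nabla w\right\rangle.
\end{equation*}

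Next I would integrate this identity against $d\Vol_f$ over $\Sigma$. Since $h$ has compact support and $w>0$ is smooth, the function $h^2/w$ lies in $Lip_c(\Sigma)$, so the weighted integration-by-parts formula (the divergence theorem for $\Delta_f$, with no boundary terms because $\Sigma$ has no boundary and the support is compact) yields
\begin{equation*}
\int_\Sigma\left\langle\nabla\!\left(\frac{h^2}{w}\right),\nabla w\right\rangle d\Vol_f = -\int_\Sigma\frac{h^2}{w}\,\Delta_f w\, d\Vol_f.
\end{equation*}
Now I use the supersolution hypothesis $\Delta_f w + g'(u)w\le 0$, i.e. $\Delta_f w \le -g'(u)w$. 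Since $h^2/w \ge 0$, multiplying and integrating preserves the inequality, giving $-\int_\Sigma \frac{h^2}{w}\Delta_f w\, d\Vol_f \le \int_\Sigma \frac{h^2}{w} g'(u) w\, d\Vol_f = \int_\Sigma g'(u)h^2\, d\Vol_f$. Rearranging the chain
\begin{equation*}
\int_\Sigma w^2|\nabla\phi|^2\, d\Vol_f = \int_\Sigma|\nabla h|^2\, d\Vol_f - \int_\Sigma\left\langle\nabla\!\left(\tfrac{h^2}{w}\right),\nabla w\right\rangle d\Vol_f \le \int_\Sigma|\nabla h|^2\, d\Vol_f - \int_\Sigma g'(u)h^2\, d\Vol_f
\end{equation*}
delivers \eqref{3}. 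The equality statement is immediate: if $\Delta_f w + g'(u)w = 0$, every inequality in the chain becomes an equality.

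The main technical point to be careful about is the justification of the integration by parts and the integrability of all terms: one should check that $h^2/w$ and its gradient are genuinely in the right class so that $\int_\Sigma \di_f(\frac{h^2}{w}\nabla w)\, d\Vol_f = 0$, where $\di_f X = e^f\di(e^{-f}X)$. This is unproblematic here because $h$ is compactly supported and Lipschitz while $w\in C^2(\Sigma)$ is strictly positive, so $h^2/w$ is compactly supported and Lipschitz and $\nabla w$ is continuous; a standard density/approximation argument reduces the Lipschitz case to the smooth case if one prefers. A secondary point worth a sentence is that although $h$ is only Lipschitz, the identity for $|\nabla h|^2$ holds almost everywhere, which is all that is needed for the integral identities. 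No curvature or stability input is used in this lemma — it is purely the algebraic Picone identity plus the divergence theorem — so there is no serious obstacle; the lemma's role is to later combine the stability of $u$ (which supplies such a positive $w$ via the Fischer-Colbrie–Schoen-type equivalence noted above) with a suitable choice of test function $h$ built from $|\nabla u|$.
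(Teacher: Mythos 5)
Your proposal is correct and follows essentially the same route as the paper: both rest on the pointwise identity $\bigl\langle\nabla\bigl(\tfrac{h^2}{w}\bigr),\nabla w\bigr\rangle=|\nabla h|^2-w^2\bigl|\nabla\bigl(\tfrac{h}{w}\bigr)\bigr|^2$, pairing the supersolution inequality with the nonnegative test function $h^2/w$ and integrating by parts. The only difference is cosmetic (you expand $|\nabla h|^2$ first and then integrate, while the paper integrates the equation against $h^2/w$ first and then invokes the identity), and your extra remarks on the admissibility of $h^2/w$ as a test function are sound.
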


\begin{proof}
Multiplying $\Delta_fw+g^{\prime}(u)w$ by the test function $\dfrac{h^2}{w}$, integrating and using integration by parts we deduce
\begin{equation*}\label{4}
-\int_{\Sigma}(\Delta_f w+g^{\prime}(u)w)\frac{h^2}{w}\, d\Vol_f=
\int_{\Sigma}\left\langle\nabla\left(\frac{h^2}{w}\right),\nabla w\right\rangle d\Vol_f -\int_{\Sigma} g^{\prime}(u)h^2d\Vol_f.
\end{equation*}
Since 
$$\left\langle\nabla\left(\frac{h^2}{w}\right),\nabla w\right\rangle=2\frac{h}{w}\langle\nabla h,\nabla 
w\rangle-\frac{h^2}{w^2}|\nabla w|^2,$$
using the identity
\begin{equation*}
 w^2\left|\nabla\left(\frac{h}{w}\right)\right|^2=|\nabla h|^2+\frac{h^2}{w^2}|\nabla 
w|^2-2\frac{h}{w}\langle\nabla w,\nabla h\rangle,
\end{equation*}
we infer that
\begin{equation*}\label{5}
\left\langle\nabla\left(\frac{h^2}{w}\right),\nabla 
w\right\rangle=|\nabla h|^2-w^2\left|\nabla\left(\frac{h}{w}\right)\right|^2.
\end{equation*}
Inserting the latter equality into the integral equation at this lemma and using our hypothesis we conclude the desired result.
\end{proof}

\begin{lemma}\label{25}
 Under the assumptions of the former Lemma, for every $h\in C^\infty(\Sigma)$ the following integral inequality holds true:

 \begin{align}\label{9}
&\int_{\Sigma}[|\nabla^2 u|^2+\Ric_f(\nabla u,\nabla u)]h^2\, d\Vol_f-\int_{\Sigma} h^2|\nabla|\nabla u||^2d\Vol_f\leq\\ \nonumber
 &\hspace{1cm} \int_{\Sigma}|\nabla h|^2|\nabla 
u|^2d\Vol_f-\int_{\Sigma}w^2\left|\nabla\left(\frac{h|\nabla u|}{w}\right)\right|^2d\Vol_f.
 \end{align} 
 The inequality is indeed an equality if $\Delta_f w+g^{\prime}(u)w=0$ on $\Sigma$.
\end{lemma}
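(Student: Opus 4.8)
The plan is to combine the classical Bochner--Weitzenb\"ock formula with the Picone-type inequality of Lemma \ref{ll1}, applied to the test function $h|\nabla u|$ in place of $h$. First I would recall the weighted Bochner formula: for a function $u$ with $\Delta_f u + g(u)=0$ one has
\[
\tfrac12\Delta_f|\nabla u|^2 = |\nabla^2 u|^2 + \Ric_f(\nabla u,\nabla u) + \langle \nabla \Delta_f u, \nabla u\rangle
= |\nabla^2 u|^2 + \Ric_f(\nabla u,\nabla u) - g'(u)|\nabla u|^2,
\]
using that $\langle \nabla \Delta_f u,\nabla u\rangle = -\langle \nabla(g(u)),\nabla u\rangle = -g'(u)|\nabla u|^2$. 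Rewriting the left side via $\tfrac12\Delta_f|\nabla u|^2 = |\nabla u|\Delta_f|\nabla u| + |\nabla|\nabla u||^2$ (valid pointwise on the open set where $\nabla u\neq 0$, and extendable by Kato-type/approximation arguments across the critical set), this gives the pointwise identity
\[
|\nabla u|\,\Delta_f|\nabla u| = |\nabla^2 u|^2 + \Ric_f(\nabla u,\nabla u) - |\nabla|\nabla u||^2 - g'(u)|\nabla u|^2 .
\]

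Next I would multiply this identity by $h^2$ (for $h\in C^\infty(\Sigma)$, and after truncating to make things compactly supported) and integrate with $d\Vol_f$; integrating the term $\int h^2|\nabla u|\Delta_f|\nabla u|\,d\Vol_f$ by parts yields $-\int \langle \nabla(h^2|\nabla u|),\nabla|\nabla u|\rangle\,d\Vol_f$, which after expanding and using Young/Cauchy--Schwarz can be bounded above by $\int |\nabla h|^2|\nabla u|^2\,d\Vol_f - \int h^2|\nabla|\nabla u||^2\,d\Vol_f$ — more precisely, one gets
\[
-\int_\Sigma \langle \nabla(h^2|\nabla u|),\nabla|\nabla u|\rangle\,d\Vol_f
= -\int_\Sigma h^2|\nabla|\nabla u||^2\,d\Vol_f - 2\int_\Sigma h|\nabla u|\,\langle \nabla h,\nabla|\nabla u|\rangle\,d\Vol_f.
\]
Rearranging the resulting integral inequality produces
\[
\int_\Sigma\big[|\nabla^2 u|^2 + \Ric_f(\nabla u,\nabla u)\big]h^2\,d\Vol_f - \int_\Sigma h^2|\nabla|\nabla u||^2\,d\Vol_f \le \int_\Sigma |\nabla h|^2|\nabla u|^2\,d\Vol_f - \int_\Sigma g'(u)(h|\nabla u|)^2\,d\Vol_f + (\text{leftover cross terms}).
\]
The point of introducing $w$ is precisely to absorb those leftover terms: applying Lemma \ref{ll1} with test function $h|\nabla u|\in Lip_c(\Sigma)$ gives $\int_\Sigma g'(u)(h|\nabla u|)^2\,d\Vol_f \le \int_\Sigma |\nabla(h|\nabla u|)|^2\,d\Vol_f - \int_\Sigma w^2|\nabla(h|\nabla u|/w)|^2\,d\Vol_f$, and expanding $|\nabla(h|\nabla u|)|^2 = |\nabla h|^2|\nabla u|^2 + h^2|\nabla|\nabla u||^2 + 2h|\nabla u|\langle\nabla h,\nabla|\nabla u|\rangle$ shows that the cross terms and the $\int |\nabla h|^2|\nabla u|^2$ pieces combine exactly to cancel and to yield the claimed inequality \eqref{9}. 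The equality case propagates because both the Bochner step (an identity) and Lemma \ref{ll1} (equality when $w$ solves the equation) are sharp.

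The main obstacle I expect is the regularity/integration-by-parts justification across the critical set $\{\nabla u = 0\}$: the function $|\nabla u|$ is only Lipschitz, not $C^2$, so the pointwise Bochner identity and the decomposition $\tfrac12\Delta_f|\nabla u|^2 = |\nabla u|\Delta_f|\nabla u| + |\nabla|\nabla u||^2$ must be interpreted weakly or established on $\{\nabla u\neq 0\}$ and then extended. The standard remedy is to work with $\sqrt{|\nabla u|^2+\varepsilon}$, carry out all computations smoothly, and let $\varepsilon\to 0$ using that $u\in C^3$ controls $|\nabla^2 u|$ and that $|\nabla|\nabla u|| \le |\nabla^2 u|$ (Kato's inequality) keeps every integrand dominated; the critical set has measure controlled well enough that no boundary contribution survives. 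A secondary technical point is ensuring $h|\nabla u|$ is an admissible test function in Lemma \ref{ll1} — it is Lipschitz with compact support once $h\in C_c^\infty$ (and for general $h\in C^\infty(\Sigma)$ one first multiplies by a cutoff), so this is routine.
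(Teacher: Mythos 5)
Your proposal is correct and follows essentially the same route as the paper: the weighted Bochner formula integrated against $h^2$, combined with Lemma \ref{ll1} applied to the test function $h|\nabla u|$, with the cross terms cancelling exactly (no Young/Cauchy--Schwarz loss is actually needed). The only deviation is your intermediate rewriting $\tfrac12\Delta_f|\nabla u|^2=|\nabla u|\,\Delta_f|\nabla u|+|\nabla|\nabla u||^2$, which creates the regularity worry you flag across $\{\nabla u=0\}$; the paper sidesteps this by keeping $\Delta_f|\nabla u|^2$ (smooth, since $u\in C^3$) and only invoking $\nabla|\nabla u|^2=2|\nabla u|\,\nabla|\nabla u|$ weakly, so your $\varepsilon$-regularization is an unnecessary detour rather than a gap.
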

\begin{proof}
 Recall the B$\ddot{\text{o}}$chner formula
 $$\frac{1}{2}\Delta_f|\nabla u|^2=|\nabla^2 u|^2+\langle\nabla u,\nabla(\Delta_f u)\rangle+\Ric_f(\nabla u,\nabla u),$$
 for all $u\in C^3(\Sigma)$. Since $u$ solves $-\Delta_f u=g(u)$, we get
\begin{equation*}\label{6}
\frac{1}{2}\Delta_f|\nabla u|^2=|\nabla^2 u|^2-g^{\prime}(u)|\nabla u|^2+\Ric_f(\nabla u,\nabla u).
\end{equation*}
Integrating the latter equality against the test function $h^2$ and setting $$I=\displaystyle\int_{\Sigma}(|\nabla^2u|^2+\Ric_f(\nabla u,\nabla u))h^2\,  d\Vol_f, $$ we deduce that
$$\begin{array}{cll}\label{7}
\vspace{0.1cm} I &=&\displaystyle\int_{\Sigma}g^{\prime}(u)|\nabla u|^2h^2\, d\Vol_f +\frac{1}{2}\displaystyle\int_{\Sigma}h^2\Delta_f|\nabla u|^2 d\Vol_f\\ \vspace{0.1cm}
&=&\displaystyle\int_{\Sigma}g^{\prime}(u)|\nabla u|^2h^2\, d\Vol_f-\frac{1}{2}\displaystyle\int_{\Sigma}\left\langle\nabla h^2,\nabla|\nabla u|^2\right\rangle d\Vol_f\\ 
&=&\displaystyle\int_{\Sigma}g^{\prime}(u)|\nabla u|^2h^2\, d\Vol_f-\int_{\Sigma}h\langle\nabla h,\nabla|\nabla u|^2\rangle d\Vol_f.
 \end{array}$$
Now, we plugging the test function $h|\nabla u|$ into $(\ref{3})$ in Lemma \ref{ll1} to obtain:
 \begin{align*}\label{8}
0&\leq  \int_{\Sigma}|\nabla(h|\nabla u|)|^2\, d\Vol_f-\int_{\Sigma}g^{\prime}(u)h^2|\nabla u|^2d\Vol_f
-\int_{\Sigma}w^2\left|\nabla\left(\frac{h|\nabla u|}{w}\right)\right|^2d\Vol_f\\ \nonumber
&=\int_{\Sigma}|\nabla h|^2|\nabla u|^2d\Vol_f+\int_{\Sigma}h^2|\nabla|\nabla u||^2d\Vol_f+2\int_{\Sigma}h |\nabla u|\langle\nabla h,\nabla|\nabla u|\rangle d\Vol_f \\ \nonumber
&\hspace{0.4cm}-\int_{\Sigma}g^{\prime}(u)h^2|\nabla u|^2d\Vol_f -\int_{\Sigma}w^2\left|\nabla\left(\frac{h|\nabla u|}{w}\right)\right|^2d\Vol_f.\nonumber
 \end{align*}
 Recalling that $\nabla|\nabla u|^2=2|\nabla u|\nabla|\nabla u|$ weakly on $\Sigma$ and putting the integral $I$ into the former inequality we conclude $(\ref{9})$ as desired.
\end{proof}

For the next result, we fix the following notation. Denote the level set of $u$ by $\Sigma_t = \{u=t\}$ and by $A_t$ its second fundamental form. Assuming that $p$ is a regular point of $u$, that is, $\nabla u(p)\neq 0$ we are able to prove:

\begin{lemma}[Kato's inequality for functions]\label{19} Under the above notation holds:

 $$|\nabla^2 u|^2-|\nabla|\nabla u||^2=|\nabla u|^2|A_t|^2+|\nabla^T|\nabla u||^2$$
 at $p\in \Sigma_t$, where $\nabla^T$ is the tangential gradient on the level set $\Sigma_t$.
\end{lemma}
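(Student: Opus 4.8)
The plan is to fix the regular point $p$ and work in a local orthonormal frame $\{e_1,\dots,e_{n-1},e_n\}$ adapted to the level set through $p$: the vectors $e_1,\dots,e_{n-1}$ span $T_p\Sigma_t$ and $e_n=N:=\nabla u/|\nabla u|$ is the unit normal (well defined near $p$ since $\nabla u(p)\neq 0$). Writing $u_{ij}=\nabla^2u(e_i,e_j)$, the first observation is that $\nabla u=|\nabla u|\,e_n$ at $p$, so $u_i:=\langle\nabla u,e_i\rangle=0$ for $i\le n-1$.

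Next I would record the pointwise identity $\nabla|\nabla u|^2=2\,\nabla^2u(\nabla u,\cdot)$, valid at regular points, which yields $\nabla|\nabla u|=\nabla^2u(N,\cdot)$; in the chosen frame this means $(\nabla|\nabla u|)_i=u_{in}$. Consequently $|\nabla|\nabla u||^2=\sum_{i=1}^n u_{in}^2$, and since the tangential gradient on $\Sigma_t$ is just the orthogonal projection onto $T_p\Sigma_t$, one gets $|\nabla^T|\nabla u||^2=\sum_{i=1}^{n-1}u_{in}^2$. Combining this with $|\nabla^2u|^2=\sum_{i,j=1}^n u_{ij}^2$ and using the symmetry $u_{in}=u_{ni}$ of the Hessian, a direct bookkeeping of the indices gives $|\nabla^2u|^2-|\nabla|\nabla u||^2=\sum_{i,j=1}^{n-1}u_{ij}^2+\sum_{i=1}^{n-1}u_{in}^2$, where the last sum is exactly $|\nabla^T|\nabla u||^2$.

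It then remains to identify the purely tangential block $\sum_{i,j=1}^{n-1}u_{ij}^2$ with $|\nabla u|^2|A_t|^2$. For tangent vectors $X,Y\in T_p\Sigma_t$ I would compute $A_t(X,Y)=\langle\nabla_XN,Y\rangle=\langle\nabla_X(\nabla u/|\nabla u|),Y\rangle$; the term produced by differentiating the scalar factor $1/|\nabla u|$ drops out because $\langle\nabla u,Y\rangle=0$, leaving $A_t(X,Y)=|\nabla u|^{-1}\nabla^2u(X,Y)$. Hence $A_t(e_i,e_j)=u_{ij}/|\nabla u|$ for $i,j\le n-1$, so $|A_t|^2=|\nabla u|^{-2}\sum_{i,j=1}^{n-1}u_{ij}^2$, and substituting this into the previous display yields the claimed equality.

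The main point to be careful about is not analytic (since $p$ is assumed \emph{regular}) but organizational: the pointwise formula for $\nabla|\nabla u|$, the verification that in the subtraction each off-diagonal entry $u_{in}$ ($i\le n-1$) is counted exactly once, and the clean cancellation exhibiting $A_t$ as the restriction of $|\nabla u|^{-1}\nabla^2u$ to $\Sigma_t$. Once these are in place the identity follows by collecting terms.
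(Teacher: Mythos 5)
Your proposal is correct and follows essentially the same route as the paper: an adapted orthonormal frame with $e_n=\nabla u/|\nabla u|$, the identity $\nabla|\nabla u|=\nabla^2u(\nu,\cdot)$ at regular points, the block decomposition of $|\nabla^2u|^2$ into tangential, mixed, and normal parts, and the identification of the tangential block with $|\nabla u|^2|A_t|^2$ (the paper's sign convention $A_t=-\nabla^2u|_{T\Sigma_t\times T\Sigma_t}/|\nabla u|$ is immaterial since only $|A_t|^2$ enters). The only difference is presentational: you subtract $|\nabla|\nabla u||^2$ in index form, while the paper expands $|\nabla^2u|^2$ directly as the sum of the three terms.
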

\begin{proof}
In this proof we will omit the sub-index $t$. Fix a local orthonormal frame $\{e_i\}$ on $\Sigma$, and let $\nu=\nabla u/|\nabla u|$ be the normal vector. For every vector field 
$X\in\mathfrak{X}(\Sigma)$,
$$\nabla^2 u(\nu,X)=\frac{1}{|\nabla u|}|\nabla^2 u(\nabla u,X)=\frac{1}{2|\nabla u|}\langle\nabla|\nabla u|^2, X\rangle=\langle\nabla 
|\nabla u|,X\rangle.$$
Moreover, for a level set 
$$A=-\frac{\nabla^2 u|_{T\Sigma\times T\Sigma}}{|\nabla u|}.$$
Therefore
\begin{align*}
 |\nabla^2 u|^2&=\sum_{i,j}(\nabla^2 u(e_i,e_j))^2+2\sum_j(\nabla^2 u(\nu,e_j))^2+(\nabla^2 u(\nu,\nu))^2\\
 &=|\nabla u|^2|A|^2+2\sum_j\langle\nabla|\nabla u|,e_j\rangle^2+\langle\nabla|\nabla u|,\nu\rangle^2\\
 &=|\nabla u|^2|A|^2+|\nabla^T|\nabla u||^2+|\nabla|\nabla u||^2,
\end{align*}
and so we conclude the Lemma.
\end{proof}

\begin{lemma}\label{I1} For all $h \in C^\infty(\Sigma)$ holds:
\begin{equation*}
\int_\Sigma\left(|\nabla^2 u|^2-|\nabla|\nabla u||^2 + Ric_f(\nabla u, \nabla u)\right)h^2d\Vol_f  
\leq 2\int_\Sigma|\nabla h|^2|\nabla u|^2d\Vol_f.
\end{equation*}
The equality holds if and only if $|\nabla u | = cw$ for some real constant $c$.
\end{lemma}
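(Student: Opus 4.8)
The statement is, up to a harmless term, already contained in Lemma \ref{25}, so I would start from there. Let $w>0$ be a solution of $\Delta_f w+g^{\prime}(u)w=0$, which exists because $u$ is stable (the adaptation of \cite[Theorem 1]{FCS} recalled in Section 2), and apply $(\ref{9})$ with this $w$. If $h$ has compact support, the right-hand side of $(\ref{9})$ equals
$$\int_{\Sigma}|\nabla h|^2|\nabla u|^2\,d\Vol_f-\int_{\Sigma}w^2\left|\nabla\left(\frac{h|\nabla u|}{w}\right)\right|^2d\Vol_f,$$
and since the subtracted integral is nonnegative we may discard it; this already yields the asserted inequality --- in fact with the constant $1$ in place of $2$ --- and records that equality can occur only when $\int_{\Sigma}w^2|\nabla(h|\nabla u|/w)|^2d\Vol_f=0$.

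To reach an arbitrary $h\in C^\infty(\Sigma)$ I would run a cutoff/exhaustion argument, since the integrations by parts and the stability inequality used to derive $(\ref{9})$ are licit a priori only for compactly supported test functions (if the right-hand side of the claimed inequality is infinite there is nothing to prove, so one may assume $\int_\Sigma|\nabla h|^2|\nabla u|^2d\Vol_f<\infty$). Fix $\chi_R\in Lip_c(\Sigma)$ with $0\le\chi_R\le1$ and $\chi_R\equiv1$ on $B_R$, apply the compactly supported case to $h\chi_R$, and estimate $|\nabla(h\chi_R)|^2\le2\chi_R^2|\nabla h|^2+2h^2|\nabla\chi_R|^2$; this Cauchy--Schwarz split is exactly what produces the factor $2$. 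Letting $R\to\infty$, the first term on the right tends to $2\int_{\Sigma}|\nabla h|^2|\nabla u|^2d\Vol_f$ by monotone convergence. On the left, Kato's inequality (Lemma \ref{19}) identifies the integrand with $|\nabla u|^2|A_t|^2+|\nabla^T|\nabla u||^2+\Ric_f(\nabla u,\nabla u)$, so under a lower weighted-Ricci bound of the type $\Ric_f(\nabla u,\nabla u)\ge -H_t^2|\nabla u|^2/(n-1)$ it is pointwise nonnegative and one passes to the limit by monotone convergence (otherwise one splits the integrand into positive and negative parts).

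The step I expect to be the main obstacle is showing that the error term $\int_{\Sigma}h^2|\nabla\chi_R|^2|\nabla u|^2d\Vol_f$ tends to $0$ for a judicious choice of the cutoffs; this is where the ambient hypotheses must enter. If $\Sigma$ is $f$-parabolic, one takes $\chi_R$ from a sequence realizing $\mathrm{cap}_f(K)=0$ (Proposition \ref{13}), so that $\int_{\Sigma}|\nabla\chi_R|^2d\Vol_f\to0$ and, because $\nabla u\in L^\infty$, the error vanishes; if instead $\int_{B_R}|\nabla u|^2d\Vol_f=o(R^2\log R)$, one uses the logarithmic cutoff ($\chi_R\equiv1$ on $B_R$, interpolating linearly in $\log r$ down to $0$ on $B_{R^2}\setminus B_R$, so $|\nabla\chi_R|\lesssim(r\log R)^{-1}$) together with a dyadic decomposition of the annulus to see the error is $o(1)$. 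Finally, the equality clause: the only inequality in the argument that can be strict is the one coming from discarding $\int_{\Sigma}w^2|\nabla(h|\nabla u|/w)|^2d\Vol_f\ge0$, so equality forces $h|\nabla u|$ to be a constant multiple of $w$ on the connected manifold $\Sigma$; in the limiting regime relevant to the theorems ($h\to1$) this reads $|\nabla u|=cw$. Conversely, if $|\nabla u|=cw$, then $|\nabla u|$ itself solves $\Delta_f|\nabla u|+g^{\prime}(u)|\nabla u|=0$, and feeding this into the Bochner formula used in Lemma \ref{25} forces $|\nabla^2u|^2-|\nabla|\nabla u||^2+\Ric_f(\nabla u,\nabla u)\equiv0$; hence the left-hand side vanishes and, for $h$ constant (the case that matters in the limit), so does the right-hand side, giving equality.
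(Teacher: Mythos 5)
Your route is genuinely different from the paper's, and the difference matters. The paper never discards the term $\int_\Sigma w^2|\nabla(h|\nabla u|/w)|^2\,d\Vol_f$ wholesale: it expands $\nabla(h|\nabla u|/w)=\frac{|\nabla u|}{w}\nabla h+h\,\nabla(|\nabla u|/w)$ and applies the weighted Cauchy inequality $|X+Y|^2\ge(1-\delta)|X|^2+(1-\delta^{-1})|Y|^2$ with $\delta=\tfrac12$, which transfers one copy of $\int|\nabla h|^2|\nabla u|^2\,d\Vol_f$ to the right-hand side (whence the factor $2=\delta^{-1}$) while keeping the nonnegative remainder $\tfrac12\int h^2w^2|\nabla(|\nabla u|/w)|^2\,d\Vol_f$ on the left. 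That retained remainder is precisely what yields the equality clause ``$|\nabla u|=cw$'' and what the proof of Theorem \ref{10} later exploits (with $h=h_j\uparrow 1$ and the right side tending to $0$) to force $\nabla(|\nabla u|/w)\equiv0$. The paper's proof is thus a purely algebraic manipulation of $(\ref{9})$; no cutoff or exhaustion enters at this stage. Your observation that compactly supported $h$ even give the constant $1$ is correct --- it is Lemma \ref{25} with the last term dropped --- but it is not the statement being proved, and it loses the quantitative handle on the equality case.

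The genuine gap is in your extension to arbitrary $h\in C^\infty(\Sigma)$. To kill the error term $\int h^2|\nabla\chi_R|^2|\nabla u|^2\,d\Vol_f$ you invoke $f$-parabolicity plus $\nabla u\in L^\infty$, or the $o(R^2\log R)$ growth, and to pass to the limit on the left you invoke the curvature lower bound via Lemma \ref{19}; none of these are hypotheses of Lemma \ref{I1}, which is stated only under the assumptions of Lemma \ref{ll1}. As written, your argument proves the inequality for general $h$ only under the hypotheses of Theorem \ref{10}, i.e.\ you have folded the content of Section 4 into the lemma. (The paper is admittedly loose on the same point: Lemmas \ref{25} and \ref{I1} say ``$h\in C^\infty(\Sigma)$'' while their proofs, and every subsequent use, require compact support; but the fix is to read ``$h\in Lip_c(\Sigma)$'', not to import global hypotheses.) Relatedly, your equality discussion identifies $|\nabla u|=cw$ only ``in the limit $h\to1$''; for a fixed compactly supported $h$ your constant-$1$ equality case would force $h|\nabla u|$ to be a constant multiple of $w$, which is vacuous unless $h$ never vanishes. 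The paper's $\delta$-splitting avoids this by isolating the $h$-independent quantity $\nabla(|\nabla u|/w)$ itself.
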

\begin{proof}
Using the formula $(\ref{9})$ in Lemma \ref{25}, we have  for all test function $h$:
\begin{equation}\label{11}
\int_{\Sigma}\left(|\nabla^2 u|^2- |\nabla|\nabla u||^2+\Ric_f(\nabla u,\nabla u)\right)h^2\, d\Vol_f
\end{equation} 
$$
\hspace{2cm}\leq \int_{\Sigma}|\nabla h|^2|\nabla u|^2d\Vol_f-\int_{\Sigma}w^2\left|\nabla\left(\frac{h|\nabla u|}{w}\right)\right|^2d\Vol_f.
$$

Using the weighted Cauchy inequality for vectors we get
$$|X+Y|^2\geq |X|^2+|Y|^2-2|X||Y|\geq (1-\delta)|X|^2+(1-\delta^{-1})|Y|^2,$$
for each $\delta>0$, and so the last term of the right hand side can be arranged as

 $$
 w^2\left|\nabla\left(\frac{h|\nabla u|}{w}\right)\right|^2
\geq(1-\delta^{-1})|\nabla u|^2|\nabla h|^2+(1-\delta)h^2w^2\left|\nabla\left(\frac{|\nabla u|}{w}\right)\right|^2.
$$

Plugging this into $(\ref{11})$ yields

\begin{align*}
&\int_\Sigma\left(|\nabla^2 u|^2-|\nabla|\nabla u||^2 + Ric_f(\nabla u, \nabla u)\right)h^2d\Vol_f + \\  
&\hspace{0.5cm}(1-\delta)\int_\Sigma h^2w^2\left|\nabla\left(\frac{|\nabla u|}{w}\right)\right|^2d\Vol_f
\leq\frac{1}{\delta}\int_\Sigma|\nabla h|^2|\nabla u|^2d\Vol_f.
\end{align*}
Choosing $\delta=\frac{1}{2}$ we conclude the desired result.

\end{proof}

\section{Proof of theorem \ref{10} }

{\bf Proof of Theorem \ref{10}.} 
We claim that, for a suitable family $\{h_j\}_{j\in\mathbb{N}}$, it holds
\begin{equation}\label{14}
\hspace{0.75cm} \{h_{j}\}\,\,\text{is monotone increasing to 1,} \,\,\,\,\lim_{j\rightarrow+\infty}\int_\Sigma|\nabla h_{j}|^2|\nabla u|^2d\Vol_f=0.
\end{equation}
Choose $h_j$ as follows, according to the case.

In the first case, { (i)}, fix $\Omega\Subset \Sigma$ with smooth boundary and let $\{\Omega_j\}\uparrow \Sigma$ be a smooth exhaustion with 
$\Omega\subset \Omega_1$. Choose $h_j\in Lip_c(M)$ to be identity $1$ on $\Omega$, $0$ on $M\backslash\Omega_j$ and the  $f$-harmonic capacitor on $\Omega_j\backslash\Omega$, that is, the solution of

$$\begin{cases}
 \Delta_fh_j=0\,\,\,\,&\text{on } \Omega_j\backslash\Omega\\
 h_j=1\,\,\,\,& \text{on } \partial\Omega,\\
 h_j=0\,\,\, &\text{on } \partial \Omega_j,
\end{cases}
$$
which exists by variational arguments. By Maximum principle and since $\Sigma$ is $f$-parabolic, $\{h_j\}$ is monotonically increasing and pointwise convergent to $1$, and furthermore
$$\int_{\Omega_j}|\nabla\phi_j|^2|\nabla u|^2d\Vol_f\leq |\nabla u|^2_{L^{\infty}}\text{cap}_f(\Omega,\Omega_j)\rightarrow |\nabla u|^2_{L^{\infty}}\text{cap}_f(\Omega)=0,$$
where we have used the { Proposition \ref{13}}. This conclude $(\ref{14})$ under the hypothesis (i).

In the second case, { (ii)}, we apply a logarithmic cut-off argument. For fixed $R>0$, choose the following radial function $h(x)=h_R(r(x))$:
\begin{equation*}
 h_R(r)=\begin{cases}
  1 & \text{if } r\leq\sqrt{R},\\
  2-2\frac{\log r}{\log R} &\text{if } r\in[\sqrt{R},R],\\
  0          &\text{if } r\geq R.
 \end{cases}
\end{equation*}
Note that 
$$|\nabla h(x)|^2=\frac{4}{r(x)^2\log^2R}\chi_{B_R\backslash B_{\sqrt{R}}}(x),$$
where $\chi_A$ is the characteristic function of a subset $A\subseteq M$. Choose $R$ in such a way that $\log R/2$ is an integer. Then

\begin{align}\label{15}
 \int_M|\nabla h|^2|\nabla u|^2d\Vol_f&=\int_{B_R\backslash B_{\sqrt{R}}}|\nabla h|^2|\nabla 
u|^2d\Vol_f\\\nonumber
&=\frac{4}{\log^2R}\sum_{k=\log R/2}^{\log R-1}\int_{B_{e^{k+1}}\backslash B_{e^k}}\frac{|\nabla u|^2}{r(x)^2}d\Vol_f\\\nonumber
&\leq \frac{4}{\log^2R}\sum_{k=\log R/2}^{\log R}\frac{1}{e^{2k}}\int_{B_{e^{k+1}}}|\nabla u|^2d\Vol_f.
\end{align}

By assumption
$$\int_{B_{e^{k+1}}}|\nabla u|^2d\Vol_f\leq(k+1)e^{2(k+1)}\gamma(k),$$
for some $\gamma(k)$ satisfying $\gamma(k)\rightarrow 0$ as $k\rightarrow+\infty$. Without loss of generality, we can assume $\gamma(k)$  
to be decreasing as function of $k$. Hence,

 \begin{align}\label{16}
  \frac{4}{\log^2R}\sum_{k=\log R/2}^{\log R}\frac{1}{e^{2k}}\int_{B_{e^{k+1}}}|\nabla u|^2d\Vol_f&\leq \frac{8}{\log^2R}\sum_{k=\log 
R/2}^{\log R}\frac{e^{2(k+1)}}{e^{2k}}(k+1)\gamma(k)\\\nonumber
&\leq \frac{8e^2}{\log^2R}\gamma(\log R/2)\sum_{k=0}^{\log R}(k+1)\\\nonumber
&\leq\frac{C}{\log^2 R}\gamma(\log R/2)\log^2 R\\\nonumber
&=C\gamma(\log R/2),
 \end{align}
for some constant $C>0$. Combining $(\ref{15})$ and $(\ref{16})$ and letting $R\rightarrow+\infty$ we deduce $(\ref{14})$. Therefore, in both cases, from Lemma \ref{I1}  we infer 

$$\int_\Sigma\left(|\nabla^2 u|^2-|\nabla|\nabla u||^2 + Ric_f(\nabla u, \nabla u)\right)d\Vol_f \leq 0.$$

Now we are able to apply the Co-area formula and Lemma \ref{19}, and so we obtain

$$\int_\real\int_{\Sigma_t}\left(|A_t|^2 - \frac{H^2_t}{n-1}\right)|\nabla u|\, d\hf dt\leq  $$
$$\int_\Sigma\left(|\nabla^2 u|^2-|\nabla|\nabla u||^2 + Ric_f(\nabla u, \nabla u)\right)d\Vol_f \leq 0.$$

Since for all operator $T$ acting on a vector space of dimension $n$ we have that $|T|^2 \geq \frac{trace(T)^2}{n-1}$ and the equality holds if and only if $T$ is multiple of the identity, we obtain: 
\begin{equation}\label{17}
 |\nabla u|=cw,\,\,\,\text{ for some } c>0,\,\,\,\,|\nabla^2 u|^2=|\nabla|\nabla u||^2,\,\,\,\, \Sigma_t \, \,  \mbox{is totally umbilical.}
\end{equation}

Consider $\Phi$ the flow associated of $\nu=\nabla u/|\nabla u|$, which is well-defined on $\real\times\Sigma$ because $\Sigma$ is complete and $|\nu|=1$. By $(\ref{17})$ and { Lemma \ref{19}}, $|\nabla u|$ is constant on each connected component of a level set 
$\Sigma_t$. Therefore, in an adapted orthonormal frame $\{e_j,e_n=\nu\}$ for the level set $\Sigma_t$, we have that
\begin{equation} 
\left\{ \begin{array}{l}\label{20}
  |A_t|^2=\frac{H_t^2}{n-1}\,\,\,  \text{implies} \, \, \,  \nabla^2 u(e_i,e_j)=\lambda\delta_{ij},\\
  0=\langle\nabla|\nabla u|,e_j\rangle=\nabla^2 u(\nu,e_j).\\
 \end{array}\right.
\end{equation}
We point out that, by first equality in \ref{17}, $u$ has no critical points. Now on, we will  verify the property claimed in the Theorem.

Let $\gamma$ be any integral curve of $\nu$, we will prove that is a geodesic. Indeed, let $X\in\mathfrak{X}(\Sigma)$ 
be a vector field, we have that
\begin{align*}
\langle \nabla_{\gamma^{\prime}}\gamma^{\prime},X\rangle&=\frac{1}{|\nabla u|}\langle\nabla_{\nabla u}\left(\frac{\nabla u}{|\nabla u|}\right), X\rangle\\
&=\frac{1}{|\nabla u|^2}\langle\nabla_{\nabla u}\nabla u, X\rangle-\frac{1}{|\nabla u|^3}\langle\nabla u(|\nabla u|)\nabla u, X\rangle\\
&=\frac{1}{|\nabla u|^2}\Hess u(\nabla u,X)-\frac{1}{|\nabla u|^3}\langle\nabla|\nabla u|,\nabla u\rangle\langle\nabla u, X\rangle\\
&=\frac{1}{|\nabla u|}\Hess u(\nu, X)-\frac{1}{|\nabla u|}\langle\nabla |\nabla u|,\nu\rangle\langle\nu, X\rangle\\
&=\frac{1}{|\nabla u|}\Hess u(\nu, X) - \frac{1}{|\nabla u|}\Hess u(\nu,\nu)\langle\nu, X\rangle=0,
\end{align*}
where we have used $(\ref{20})$. So, $ \nabla_{\gamma^{\prime}}\gamma^{\prime}=0$ and $\gamma$ is a geodesic as claimed.
\medskip

Following the arguments in the proof of \cite[Theorem 9.3]{pigola}, step-by-step
we will prove the topological splitting. Since $|\nabla u|$ is constant 
on level sets of $u$, $|\nabla u|=\beta(u)$ for some function $\beta$. 
Evaluating $u$ along curves $\Phi_t(x)$, since $u\circ\Phi_t$ is a local 
bijection, we deduce that $\beta$ is continuous. 
\begin{claim} 
$\Phi_t$ moves level sets of $u$ to level sets of $u$.
\end{claim}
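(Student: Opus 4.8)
The plan is to show that the flow $\Phi_t$ of the unit normal field $\nu = \nabla u/|\nabla u|$ carries a whole level set $\Sigma_{t_0}$ onto a single level set. The natural quantity to track is $u\circ\Phi_s$ restricted to a connected component of $\Sigma_{t_0}$: I want to prove this function is constant in the spatial variable for each fixed flow-time $s$. Since $u$ has no critical points (by the first equality in \eqref{17}) each level set is a smooth embedded hypersurface, and $\nu$ is a globally defined smooth unit vector field, so $\Phi$ is a complete flow on $\real\times\Sigma$.

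The key computation is the following. Fix $x\in\Sigma_{t_0}$ and a tangent vector $V\in T_x\Sigma_{t_0}$, and let $V(s)$ denote the push-forward $d\Phi_s(V)$, i.e. the Jacobi-type variation field along the integral curve $s\mapsto\Phi_s(x)$ obtained from a variation through integral curves of $\nu$. First I would note that $\frac{d}{ds}\big(u\circ\Phi_s\big) = \langle\nabla u,\nu\rangle = |\nabla u| = \beta(u\circ\Phi_s)$, so along each individual integral curve $u\circ\Phi_s$ solves an autonomous ODE $\phi'=\beta(\phi)$; in particular two integral curves starting on the same level set $\{u=t_0\}$ have identical time-parametrization of their $u$-values, which already forces them to remain on a common level set as long as $\beta$ does not vanish — but $\beta=|\nabla u|$ never vanishes here. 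More carefully, I would differentiate $s\mapsto\langle\nabla u, V(s)\rangle$: using $[\nu, V]$-type commutation and $\nabla^2 u(\nu,\cdot)\big|_{T\Sigma}=0$ from \eqref{20}, together with $\langle\nabla u,V(0)\rangle=0$ since $V\in T_x\Sigma_{t_0}$, one gets a linear homogeneous ODE for $g(s):=\langle\nabla u,V(s)\rangle$ of the form $g'(s)=c(s)g(s)$, whence $g\equiv 0$. This says precisely that $d\Phi_s$ maps $T\Sigma_{t_0}$ into $\ker(du)=T\Sigma_{u(\Phi_s(x))}$, so $\Phi_s(\Sigma_{t_0})$ is everywhere tangent to a level set; being connected, it lies in a single level set $\Sigma_{\tau(s)}$ where $\tau$ solves $\tau'=\beta(\tau)$, $\tau(0)=t_0$.

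For the reverse inclusion I would use that $\Phi_{-s}$ is the flow of $-\nu$ and run the same argument, or simply observe that $\Phi_s:\Sigma_{t_0}\to\Sigma_{\tau(s)}$ is injective with inverse the restriction of $\Phi_{-s}$, and that every point $y\in\Sigma_{\tau(s)}$ lies on an integral curve of $\nu$ which at $u$-value $t_0$ meets $\Sigma_{t_0}$, so $y=\Phi_s(x)$ for that $x$. Thus $\Phi_s$ restricts to a diffeomorphism $\Sigma_{t_0}\to\Sigma_{\tau(s)}$, which is the content of the claim.

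The main obstacle I anticipate is making the variation-field computation clean: one must justify differentiating $\langle\nabla u, d\Phi_s V\rangle$ under the flow and correctly identify all the terms that vanish by \eqref{20} and by the fact that integral curves of $\nu$ are geodesics (just established in the excerpt). Concretely one needs $\frac{d}{ds}\langle\nabla u,V(s)\rangle = \langle\nabla_\nu\nabla u, V\rangle + \langle\nabla u,\nabla_\nu V\rangle = \Hess u(\nu,V) + \langle\nabla u,\nabla_V\nu\rangle$ after commuting, and then $\Hess u(\nu,V)=|\nabla u|\langle\nabla|\nabla u|,V\rangle$ which need not be zero if $V$ has left $T\Sigma$ — so the ODE must be set up carefully so that the "bad" term is proportional to $g(s)$ itself rather than an independent forcing term. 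Handling this bookkeeping, rather than any deep idea, is where the real work lies; once the linear ODE $g'=cg$ with $g(0)=0$ is in hand, uniqueness finishes it.
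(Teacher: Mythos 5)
Your opening observation is exactly the paper's proof: since $|\nabla u|$ is constant on level sets one writes $|\nabla u|=\beta(u)$, so $\frac{d}{ds}(u\circ\Phi_s)=\beta(u\circ\Phi_s)$ is an autonomous scalar ODE whose solution is determined by its initial value; the paper makes the uniqueness step airtight by separating variables to get $t=\int_{u(x)}^{u(\Phi_t(x))}\frac{d\xi}{\beta(\xi)}$, and since $\beta>0$ the antiderivative is strictly increasing, so $u(\Phi_t(x))$ depends only on $t$ and $u(x)$ --- this is worth noting because $\beta$ is only known to be continuous, so one should not invoke Picard--Lindel\"of uniqueness directly, and your phrase ``identical time-parametrization'' is precisely this separation-of-variables point. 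Everything after your ``More carefully'' is a second, independent route that the paper does not take: showing $d\Phi_s$ preserves $\ker(du)$ via the linear ODE $g'=cg$ for $g(s)=\langle\nabla u,d\Phi_s V\rangle$. That argument does close up --- the ``bad'' term $\Hess u(\nu,V)=\langle\nabla|\nabla u|,V\rangle$ reduces to $\langle\nabla|\nabla u|,\nu\rangle\,g(s)/|\nabla u|$ because $\nabla^T|\nabla u|=0$ on level sets, and $\langle\nabla u,\nabla_V\nu\rangle=\tfrac{|\nabla u|}{2}V(|\nu|^2)=0$ --- but it is strictly unnecessary here and also only gives the infinitesimal statement plus connectedness, whereas the scalar ODE gives the global statement in one line. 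In short: your proposal is correct; its first half is the paper's argument, and its second half is a valid but redundant differential-geometric alternative.
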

Indeed, integrating $\frac{d}{ds}(u\circ\Phi_s)=|\nabla u|\circ\Phi_s=\beta(u\circ\Phi_s)$ 
we get 
$$t=\int_{u(x)}^{u(\Phi_t(x))}\frac{d\xi}{\beta(\xi)},$$
thus $u(\Phi_t(x))$ is independent of $x$ varying in a level set. As $\beta(\xi)>0$,
this also show that flow lines starting from a
level set of $u$ do not touch the same level set and we conclude the claim.\qed

\medskip
Let $N$ be a connected component of a level set $\Sigma_t$ of $u$. 
\begin{claim}
$\Phi|_{\real\times N}$ is surjective.
\end{claim}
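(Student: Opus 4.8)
The plan is to show that the map $\Phi|_{\mathbb{R}\times N}$ is onto $\Sigma$ by a standard connectedness argument, exploiting that $\Sigma$ is connected and that the image is both open and closed. First I would fix a point $q\in\Sigma$; since $u$ has no critical points, $q$ lies in the regular level set $\Sigma_s$ with $s=u(q)$. The idea is to flow from $q$ backwards along $\nu$ to reach the level set $\Sigma_t$ containing $N$: by the previous Claim, $\Phi_\tau$ carries level sets to level sets, and the function $\tau\mapsto u(\Phi_\tau(q))$ is a strictly increasing diffeomorphism onto its image (its derivative is $\beta(u\circ\Phi_\tau(q))>0$), so there is a unique $\tau_0$ with $u(\Phi_{\tau_0}(q))=t$, provided $t$ lies in the range of this function. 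Thus $p:=\Phi_{\tau_0}(q)\in\Sigma_t$, but one must check $p$ lies in the \emph{same connected component} $N$ of $\Sigma_t$ — this is where connectedness of $\Sigma$ enters.

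The cleanest route is: let $A=\Phi(\mathbb{R}\times N)\subseteq\Sigma$. I claim $A$ is open. Given $p=\Phi_\tau(x)\in A$ with $x\in N$, since $\nu$ is a nonvanishing smooth vector field and $N$ is an embedded hypersurface, the map $\Phi\colon\mathbb{R}\times N\to\Sigma$ is a local diffeomorphism near $(\tau,x)$ (its differential is invertible because $\nu$ is transverse to the level sets, which are the leaves containing $N$); hence $A$ contains a neighbourhood of $p$. Next I claim $A$ is closed. Suppose $p_k=\Phi_{\tau_k}(x_k)\to p_\infty$ with $x_k\in N$. From $\tau_k=\int_{u(x_k)}^{u(p_k)}d\xi/\beta(\xi)=\int_{t}^{u(p_k)}d\xi/\beta(\xi)$ and $u(p_k)\to u(p_\infty)$, the $\tau_k$ converge to some finite $\tau_\infty$; then $x_k=\Phi_{-\tau_k}(p_k)\to\Phi_{-\tau_\infty}(p_\infty)=:x_\infty$, and $x_\infty\in\overline{N}$. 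Since $N$ is a connected component of the closed set $\Sigma_t$, it is itself closed, so $x_\infty\in N$ and $p_\infty=\Phi_{\tau_\infty}(x_\infty)\in A$. As $\Sigma$ is connected and $A$ is nonempty, open and closed, $A=\Sigma$, i.e.\ $\Phi|_{\mathbb{R}\times N}$ is surjective.

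The main obstacle I anticipate is the closedness of $A$, specifically ruling out that $\tau_k\to\pm\infty$ or that $x_k$ escapes to infinity in $N$. The first is handled by the explicit formula $\tau_k=\int_t^{u(p_k)}d\xi/\beta(\xi)$ together with continuity of $u$ and positivity of $\beta$ on the compact interval between $t$ and $u(p_\infty)$ — note $\beta$ is continuous and strictly positive, so the integral stays bounded. The second is automatic once we know $\tau_k\to\tau_\infty$ finite, because then $x_k=\Phi_{-\tau_k}(p_k)$ is a continuous image of a convergent sequence via the globally defined flow (here completeness of $\Sigma$ and $|\nu|\equiv1$, already invoked for the flow to be defined on all of $\mathbb{R}\times\Sigma$, guarantee $\Phi_{-\tau_\infty}$ is defined at $p_\infty$). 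A secondary point worth stating carefully is that $\Phi|_{\mathbb{R}\times N}$ being a local diffeomorphism uses that $\nu=\nabla u/|\nabla u|$ is transverse to every level set of $u$, which holds precisely because $u$ has no critical points; this was established in the proof of Theorem~\ref{10} via the first equality in \eqref{17}.
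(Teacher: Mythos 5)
Your proof is correct, but it takes a genuinely different route from the paper's. The paper exploits the fact, established just before this claim, that the integral curves of $\nu$ are geodesics: hence $\Phi_s(x)=\exp^{\perp}(s\nu(x))$ is the normal exponential map of $N$, and surjectivity follows from completeness of $\Sigma$ and closedness of $N$ via the standard variational fact that a geodesic minimizing $\mathrm{dist}(q,N)$ meets $N$ orthogonally. You instead run an open--closed argument on the image $A=\Phi(\mathbb{R}\times N)$: openness from transversality of $\nu$ to the level sets (so $\Phi$ is a local diffeomorphism), closedness from the explicit formula $\tau_k=\int_t^{u(p_k)}d\xi/\beta(\xi)$ of the previous Claim together with continuity and strict positivity of $\beta$, which forces the flow times to converge and then pulls the base points back into the closed set $N$. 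Your argument buys independence from the geodesic property of the flow lines --- it needs only completeness of the flow, the level-set formula, and closedness of $N$ (a connected component of the closed set $u^{-1}(t)$) --- at the cost of being longer; the paper's argument is shorter but leans on the geodesic identification and Riemannian completeness in an essential way. One small presentational point: your opening paragraph's ``flow backwards from $q$'' sketch is superseded by the connectedness argument and could be dropped; and in the openness step it is worth saying explicitly that $d(\Phi_\tau)_x(T_xN)=T_{\Phi_\tau(x)}\Sigma_{t'}$ because $\Phi_\tau$ carries level sets to level sets, which is exactly why $\nu$ is transverse to the image of $N$ under the flow.
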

Indeed,  since the flow of $\nu$ is through geodesics, for each $x\in N$, $\Phi_t$ 
coincides with the normal exponential map $\exp^{\perp}(t\nu(x))$. Moreover, 
since $N$ is closed in $\Sigma$ and $\Sigma$ is complete, the normal exponential map 
is surjective because each geodesic from $x\in \Sigma$ to $N$ minimizing distance dist$(x,N)$
is perpendicular to $N$ (by variational arguments). \qed

\begin{claim}
$\Phi|_{\real\times N}$ is injective.
\end{claim}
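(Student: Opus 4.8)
The plan is to prove injectivity of $\Phi|_{\mathbb{R}\times N}$ by contradiction, exploiting the fact that the flow lines of $\nu$ are unit-speed geodesics and that $u$ is strictly monotone along them. Suppose $\Phi_{t_1}(x_1)=\Phi_{t_2}(x_2)=:p$ for some $x_1,x_2\in N$ and $t_1,t_2\in\mathbb{R}$. Evaluating $u$ at $p$ along the two flow lines and using the identity $t=\int_{u(x)}^{u(\Phi_t(x))}\frac{d\xi}{\beta(\xi)}$ established in the first Claim (with $u(x_1)=u(x_2)=t$, the common value on $N$), we get $t_1 = \int_t^{u(p)}\frac{d\xi}{\beta(\xi)} = t_2$, so necessarily $t_1=t_2=:s$. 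Hence the two geodesics $\gamma_i(\tau)=\Phi_\tau(x_i)$, both issuing from $N$ perpendicularly (since $\nu$ is the unit normal to the level sets, by the second line of $(\ref{20})$), reach the same point $p$ after the same parameter length $|s|$.

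Next I would rule out $x_1\neq x_2$. The key point is that each such geodesic realizes the distance from $p$ to $N$: by the argument in the previous Claim, any minimizing geodesic from $p$ to $N$ is perpendicular to $N$, hence is a reparametrized flow line of $\pm\nu$; conversely, I want to argue that the flow line through $x_i$ from level $t$ to $p$ is in fact minimizing, so that $|s|=\mathrm{dist}(p,N)$. This is where one uses that $\Sigma = \Phi(\mathbb{R}\times N)$ is exhausted by the level sets $\Phi_\tau(N)$, which foliate $\Sigma$: the function $\tau$ defined by $p\in\Phi_\tau(N)$ is smooth (the level sets are regular since $u$ has no critical points) with $|\nabla\tau|=1$, so $\tau$ is a distance-type function and $|\tau(p)|=\mathrm{dist}(p,\Phi_0(N))=\mathrm{dist}(p,N)$; the integral curves of $\nabla\tau=\nu$ are the minimizing segments. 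Then both $\gamma_1,\gamma_2$ are minimizing geodesics from $p$ to $N$ of the same length, so both hit $N$ orthogonally at the foot point of a minimizing segment. To get $x_1=x_2$ I would invoke the standard first-variation/uniqueness argument: a minimizing geodesic from $p$ to the closed set $N$ is unique once we know $p$ is not in the cut locus of $N$ — and here there is no cut locus at all, precisely because the normal exponential map $\Phi|_{\mathbb{R}\times N}$ is a local diffeomorphism everywhere (it is an immersion since its differential in the $t$-direction is $\nu\neq 0$ and in the $N$-directions is the flow of a diffeomorphism), so the foot point, and therefore $x_i$, is determined by $p$.

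Finally, with $x_1=x_2$ and $t_1=t_2$, injectivity follows, and combined with the two preceding Claims ($\Phi$ surjective, $\Phi$ moves level sets to level sets) we conclude that $\Phi\colon\mathbb{R}\times N\to\Sigma$ is a diffeomorphism; pulling back the metric and using that the flow lines are unit-speed geodesics orthogonal to every $\Phi_\tau(N)$ gives the warped-product form $ds^2=dt^2+\varphi(t)^2 g_N$, and the computation of the warping factor from the umbilicity relation $\nabla^2u(e_i,e_j)=\lambda\delta_{ij}$ in $(\ref{20})$ identifies $\varphi(t)=\exp\!\bigl(-\int_0^t\lambda(s)\,ds\bigr)$, which depends on $\nabla u$ through $\lambda$.

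The main obstacle I anticipate is the global step in the second paragraph: upgrading ``each flow line hits $N$ perpendicularly'' to ``the flow line is the unique minimizing segment to $N$,'' i.e.\ showing the normal exponential map has trivial cut locus. The honest way to do this is to use that the level sets $\{\Phi_\tau(N)\}_{\tau\in\mathbb{R}}$ genuinely foliate $\Sigma$ (every point lies on exactly the level set determined by $u$, via the first Claim and surjectivity), so the signed distance to $N$ is a globally smooth function with unit gradient $\nu$; this is exactly the content of the step-by-step imitation of \cite[Theorem 9.3]{pigola}, and it is where completeness of $\Sigma$ and the absence of critical points of $u$ are both essential.
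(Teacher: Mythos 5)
Your first step --- forcing $t_1=t_2$ from the integral identity $t=\int_{u(x)}^{u(\Phi_t(x))}\frac{d\xi}{\beta(\xi)}$ with $\beta>0$ --- is exactly the paper's argument and is fine. The problem is the second half. To get $x_1=x_2$ you route the argument through minimizing geodesics: you claim each flow line from $N$ to $p$ is a minimizing segment and then assert that the foot point is unique because ``there is no cut locus at all, precisely because the normal exponential map $\Phi|_{\mathbb{R}\times N}$ is a local diffeomorphism everywhere.'' This is circular. A local diffeomorphism controls only the \emph{focal} part of the cut locus; the other part of the cut locus of $N$ consists precisely of points reached by two distinct normal minimizing segments of the same length, i.e.\ exactly the configuration $\Phi_s(x_1)=\Phi_s(x_2)$ with $x_1\neq x_2$ that you are trying to exclude. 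So ``no cut locus'' is essentially a restatement of the injectivity you want, not a reason for it, and as written the step would fail.

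The gap closes with a much more elementary observation, which is what the paper uses: for fixed $s$, the time-$s$ flow map $\Phi_s$ of the complete vector field $\nu$ is a diffeomorphism of $\Sigma$ (equivalently, two distinct integral curves of $\nu$ can never meet at the same time, by uniqueness of solutions of ODEs: flowing backward from $p$ for time $s$ recovers a unique starting point). Hence $\Phi_s(x_1)=\Phi_s(x_2)$ forces $x_1=x_2$ directly, with no need for any distance-realization or cut-locus analysis. None of the metric structure (geodesics, orthogonality, the function $\tau$) is needed for this claim; it only enters later when one identifies the pulled-back metric as a warped product.
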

Suppose that $\Phi(t_1, x_1)=\Phi(t_2, x_2)$. Then, since 
$\Phi$ moves level sets to level sets, necessarily $t_1=t_2=t$. 
If by contradiction $x_1\neq x_2$, two distinct flow lines of $\Phi_t$ would
intersect at the point  $\Phi_t(x_1)=\Phi_t(x_2)$, contradicting the fact that 
$\Phi_t$ is a diffeomorphism on $\Sigma$ for every $t$, as desired. \qed

\medskip
From the former arguments,  we conclude that  $\Phi:\mathbb R\times N\rightarrow \Sigma$ is a
 diffeomorphism. In particular, each level set $\Phi_t(N)$ is connected. This proves the 
topological part of the splitting. 

To conclude the proof, we shall verify that $\Phi$ is isometry whether we consider the product space, $\real\times N$, endowed with a warped metric with warping function depends of $\nabla u$. 
Indeed, we consider  the
Lie derivative of the metric in the direction of $\nu$:
\begin{align*}
 \hspace{-0.15cm}(\mathcal{L}_{\nu}g_\Sigma)(X,Y)&=\langle\nabla_X\nu,Y\rangle+\langle X,\nabla_Y\nu\rangle\\
 &=\frac{2}{|\nabla u|}\nabla^2 u(X,Y)+X\left(\frac{1}{|\nabla u|}\right)\langle\nabla u,Y\rangle+
 Y\left(\frac{1}{|\nabla u|}\right)\langle\nabla u,X\rangle.
\end{align*}
So, using that $|\nabla u|$ is constant on $N$ and the properties of $\nabla^2 u$, we obtain that
$$
(\mathcal{L}_{\nu}g_\Sigma)(X,Y)=\frac{2}{|\nabla u|}\nabla^2 u(X,Y)=\left\{\begin{array}{rcl}
 -2\lambda g_\Sigma(X,Y),& if&  X,Y\in T\Sigma_t \\
  0,& if & X\,  \mbox{or} \,  Y \mbox{are not in} \, T\Sigma_t
\end{array}.\right.
$$
If, however,  $X$ and $Y$ are normal (w.l.o.g. take $X=Y=\nabla u$), we have
\begin{align*}
 (\mathcal{L}_{\nu}g_\Sigma)(X,Y)&=\frac{2}{|\nabla u|}\nabla^2 u(\nabla u,\nabla u)+2\nabla u\left(\frac{1}{|\nabla u|}\right)|\nabla u|^2\\
 &=\frac{2}{|\nabla u|}\nabla^2 u(\nabla u,\nabla u)-2\nabla u(|\nabla u|)\\
 &=\frac{2}{|\nabla u|}\nabla u(|\nabla u|^2)-2\langle\nabla|\nabla u|,\nabla u\rangle=0.
\end{align*}
Summarizing, we get
$$(\mathcal{L}_\nu g_\Sigma(X,Y) = -2\lambda(\langle X,Y\rangle - (\nu\otimes\nu)(X,Y)),$$
and $\lambda= - \frac{1}{n-1}div_{\Sigma}(\frac{\nabla u}{|\nabla u|})$.
\medskip

Thus, it is a standard computation to verify that $\Phi$ is isometry from $\real\times N$ endowed with the warped metric  $ds^2 = dt^2 + e^{-2\int_0^t\lambda(s)\, ds} g_N$ to $\Sigma$.
{\flushright \qed}

\section{Proof of Theorem \ref{cor1}}

{\bf Proof of Theorem \ref{cor1}.} Under this assumption, the level sets of the function $u$ are totally geodesic and so $\lambda$ obtained in former result vanishes. Thus we conclude that $\Sigma$ splits as a Riemannian product, as desired. In particular, the weighted Ricci curvature of $N$ is nonnegative.

Lastly, we shall verify  the properties of the function $u$.  Let $\gamma$ be any 
integral curve of $\nu$. Then
$$\frac{d}{dt}(u\circ\gamma)=\langle\nabla u,\nu\rangle=|\nabla u|\circ\gamma>0,$$
since $|\nabla u|>0$. As $\Sigma$ splits 
isometrically in the direction of $\nabla u$, we get that $\Ric(\nu,\nu)=0$
and this imply  that $\Hess f(\nu,\nu)=0$. Consequently
$\langle\nabla f,\nu\rangle$ is a constant $k$ (depending of $f$)  in the splitting direction. 

By the other hand,
\begin{align*}
 -g(u\circ\gamma)=\Delta_fu(\gamma)&=\Hess u(\nu,\nu)(\gamma)-\langle\nabla f,\nabla u\rangle(\gamma)\\
 &=\langle\nabla|\nabla u|,\nu\rangle(\gamma)-\langle\nabla f,\nu\rangle|\nabla u|(\gamma)\\
 &=\frac{d}{dt}(|\nabla u|\circ\gamma)-k|\nabla u|\circ(\gamma)\\
 &=\frac{d^2}{dt^2}(u\circ\gamma)-k\frac{d}{dt}(u\circ\gamma),
\end{align*}
and thus $y=u\circ\gamma$ solves the ODE $-y^{\prime\prime}+ky^{\prime}=g(y)$ with $y^{\prime}>0$.

We next address the $f$-parabolicity. Under assumption ${\text{(i)}}$, $\Sigma$ is $f$-parabolic and so $N$ is necessarily $f$-parabolic too. We are 
going to deduce the same under assumption  ${\text{(ii)}}$. Note that the chain of inequalities
\begin{align*}
\left(\int_{-R}^R|y^{\prime}(t)|^2dt\right)\Vol_f(B_R^N)&\leq\int_{[-R,R]\times B_R^N}|y^{\prime}(t)|^2dt\, d\Vol_f^N\\
&\leq\int_{B_{R\sqrt{2}}}|\nabla u|^2d\Vol_f=o(R^2\log R)
\end{align*}
gives immediately the desired result, since $|y^{\prime}|>0$ everywhere. Thus, since $\Vol_f(B_R^N)=o(R^2\log R)$, we know that 
there is a constant $A$ such that $\Vol_f(B_R^N)\leq AR^2\log R$, that is,  
$$\frac{R}{\Vol_f(B_R^N)}\geq \frac{1}{AR\log R},$$
and so
\begin{align*}
 \lim_{t\to\infty}\int_1^{t}\frac{R\, dR}{\Vol_f(B_R^N)}&\geq A^{-1}\lim_{t\to\infty}\int_1^{t}\frac{dR}{R\log R}=A^{-1}\lim_{t\rightarrow\infty}\log(\log t)=\infty,
\end{align*}
and thus, by { Proposition \ref{23}},  $N$ is $f$-parabolic.

{\flushright \qed}

\end{document}